\documentclass[12pt,a4paper,reqno]{amsart}

\usepackage{amsmath,amssymb,amsthm}
\usepackage[margin=1in, footskip=30pt]{geometry}
\usepackage{longtable,booktabs}

\makeatletter
\renewcommand{\section}{\@startsection{section}{1}%
  {0pt}{-3.5ex plus -1ex minus -.2ex}{2.3ex plus .2ex}%
  {\normalfont\large\bfseries}}
\renewcommand{\subsection}{\@startsection{subsection}{2}%
  {0pt}{-3.25ex plus -1ex minus -.2ex}{1.5ex plus .2ex}%
  {\normalfont\normalsize\bfseries}}
\renewcommand{\@secnumfont}{\bfseries}
\makeatother

\makeatletter
\def\@settitle{}
\def\@setauthors{}
\def\@setdate{}
\def\ps@firstpage{\ps@plain}%
\def\@setcopyright{}%
\renewcommand{\copyrightinfo}[2]{}%
\AtBeginDocument{\def\@setcopyright{}\def\@serieslogo{}}%

\makeatother

\usepackage[hidelinks,
  pdftitle={Exponent-one blockers and a Mordell-Weil construction of Euler bricks},
  pdfauthor={René Peschmann},
  pdfsubject={Number Theory, Algebraic Geometry},
  pdfkeywords={Euler brick, perfect cuboid, elliptic fibration, Mordell-Weil, Gaussian factorization},
  pdflang={en},
]{hyperref}
\usepackage{enumitem}
\usepackage{microtype}

\newtheorem{theorem}{Theorem}[section]
\newtheorem{lemma}[theorem]{Lemma}
\newtheorem{proposition}[theorem]{Proposition}
\newtheorem{corollary}[theorem]{Corollary}
\newtheorem{conjecture}[theorem]{Conjecture}
\theoremstyle{definition}
\newtheorem{definition}[theorem]{Definition}
\newtheorem{remark}[theorem]{Remark}

\newtheorem{verification}[theorem]{Verification}

\makeatletter
\def\thm@space@setup{%
  \thm@preskip=10pt plus 3pt minus 2pt
  \thm@postskip=10pt plus 3pt minus 2pt
}
\makeatother
\newcommand{\QQ}{\mathbb{Q}}
\newcommand{\ZZ}{\mathbb{Z}}
\newcommand{\NN}{\mathbb{N}}
\newcommand{\Zi}{\ZZ[i]}
\newcommand{\rk}{\mathrm{rk}}

\title[Exponent-one blockers and Mordell--Weil searches]{%
  Odd-exponent blockers and a Mordell--Weil construction of Euler bricks}

\author{Ren\'e Peschmann}
\date{\small \today}

\begin{document}

\maketitle

\begin{center}
  {\large\bfseries Exponent-one blockers and a Mordell--Weil construction\\of Euler bricks}
  \par\vskip 12pt
  {Ren\'e Peschmann}
  \par\vskip 4pt
  {\small \today}
\end{center}
\vskip 16pt

\begin{abstract}
A \emph{body cuboid} is a rectangular parallelepiped whose three edges
and three face diagonals are simultaneously integer; if its space
diagonal is also integer, it is a \emph{perfect cuboid}, the existence
of which is open since the eighteenth century. We make two
contributions to the study of body cuboids parametrised by two
coprime Pythagorean pairs $(a,b)$ and $(m,n)$ in Euclid form
(\emph{Master-Hits}).

The first contribution is a \emph{verified exponent-one blocker
phenomenon}. For every verified Master-Hit, the space-diagonal norm
$f_1 := (W_1U_2)^2 + (U_1V_2)^2$ admits a prime divisor $\ell$ which
appears to exponent \emph{exactly one} and is coprime to a fixed
list of $29$ canonical expressions in the parameters. This is
strictly stronger than the mere existence of an odd-exponent prime
divisor: a prime of exponent $3$, $5$, \ldots\ would already obstruct
$f_1$ from being a square, but would carry an additional square
factor; the observed obstruction is always primitive. We verify the
phenomenon without exception on the $151{,}575$ Master-Hits of our
database for which $f_1$ has been fully factorised. We also show
that two natural variants both fail: the largest outside-parameter
prime need not be a blocker (three explicit counterexamples), and
the smallest outside-parameter blocker need not have exponent~$1$
(several hundred explicit counterexamples).

The second contribution uses the elliptic fibration of the
Master-Hit variety over the $(m,n)$-plane. For each fixed coprime
pair $(m, n)$, the Master-Hit equation defines a genus-one quartic
$H_{m,n}$, from which an algorithmic quartic-to-Weierstrass
normalisation produces an elliptic model $E_{m,n}$ together with a
rational function $\tau$ on $E_{m,n}$ that returns the value of
$t^2$. Our generator enumerates bounded Mordell--Weil combinations
on $E_{m,n}(\QQ)$, lifts precisely those points satisfying the
quadratic condition $\tau(P)\in\QQ_{>0}^{\square}$ back to
admissible Euclid pairs $(a, b)$, and certifies each result by
exact integer arithmetic.
Starting from a base of $61{,}829$ primitive Master-Hits collected
from classical exhaustive search, Rathbun's catalogue, and the
Saunderson generator, this produces $1{,}222{,}841$ further
Master-Hits over $411$ fibres. None of the resulting $1{,}284{,}670$
Master-Hits is a perfect cuboid (a finite, exact integer square
check). All fully factorised records satisfy the exponent-one
blocker phenomenon.
\end{abstract}

\section{Introduction}\label{sec:intro}

A \emph{perfect Euler brick} (or \emph{perfect cuboid}) is a
rectangular parallelepiped with positive integer edges $e_1,e_2,e_3$
whose three face diagonals \emph{and} the space diagonal are positive
integers. No example is known, and the question is open since at least
the eighteenth century; see Guy~\cite{guy} and Leech~\cite{leech}
for the classical history, Rathbun--Granlund~\cite{rathbun-granlund}
and Rathbun~\cite{rathbun} for systematic computer searches,
Sharipov~\cite{sharipov}, van~Luijk~\cite{vanluijk}, and the
author's earlier work~\cite{peschmann-paper1} for modern algebraic
and geometric approaches, and Himane~\cite{himane} for a
recent experimental parametrisation.

A \emph{body cuboid} (sometimes called \emph{Euler brick}) drops the
last requirement: its three edges and three face diagonals are
integer, but its space diagonal need not be. Body cuboids exist in
abundance, parametrised by classical infinite families
(Saunderson~\cite{saunderson}, Euler, a Lenhart-attributed
family,\footnote{The name refers to William Lenhart (early 19th
century, USA), described in~\cite{hogan-lenhart} as one of the
leading Diophantine algebraists of his time and a regular
contributor to \emph{The Mathematical Miscellany} (1836--1839).
The specific parametrisation
$(u^2-w^2)(v^2-w^2),\ 4uvw^2,\ 2uw(v^2-w^2)$ with $u^2+v^2=5w^2$
appears in the online algebraic-identities compendium of
T.~Piezas~\cite{piezas-identities} attributed to Lenhart, but we
have been unable to locate a primary source confirming the
attribution. We adopt the conventional name without claiming
historical accuracy of the attribution.}
and recently Himane~\cite{himane}) and a vastly larger sporadic set
whose structure is poorly understood.

\medskip\noindent\textbf{Two contributions.}
This paper takes a structural-and-computational stance on body
cuboids. We make two contributions.

\begin{enumerate}[label=(\arabic*)]
\item \emph{A verified exponent-one blocker phenomenon.}
We formulate a universal exponent-one blocker conjecture for
Master-Hits (Conjecture~\ref{conj:blocker}): for every Master-Hit
$(a,b,m,n)$ (Definition~\ref{def:master}), $f_1 := (W_1 U_2)^2 + (U_1 V_2)^2$
admits a prime divisor $\ell$ such that
\emph{(i) $v_\ell(f_1) = 1$} (the prime appears exactly once) and
\emph{(ii)} $\ell$ is coprime to all $29$ canonical expressions
in $a, b, m, n$. The exponent-one condition is \emph{not cosmetic}:
exponent-three or higher odd blockers would also prevent $f_1$ from
being a square, but would carry an additional square factor; the
observed obstruction is always primitive. The main computational
result of the paper (Verification~\ref{thm:finite-verification}) is the
verification of this conjecture, without exception, on the
$151{,}575$ fully factored Master-Hits in our database.
\item \emph{A Mordell--Weil construction.} The Master-Hit variety
admits an elliptic fibration over the $(m,n)$-plane. For a fixed
coprime pair $(m, n)$ the fibre is the genus-one quartic
\begin{equation}\label{eq:Hmn}
  H_{m,n}\colon\ s^2 = V_2^2\, t^4 + (4U_2^2 - 2V_2^2)\, t^2 + V_2^2,
  \quad U_2 = m^2-n^2,\ V_2 = 2mn,
\end{equation}
viewed via $t = a/b$. A standard quartic-to-Weierstrass
transformation produces an elliptic curve $E_{m,n}$ together with a
rational function $\tau$ on $E_{m,n}$ that returns the value of
$t^2$. \emph{Suitable} rational points on $E_{m,n}$ (those at which
$\tau$ is a positive rational square, so that $t = a/b\in\QQ^\times$
satisfies the parity and coprimality conditions of
Definition~\ref{def:master}) yield Master-Hits. We give the
explicit transformation $H_{m,n}\dashrightarrow E_{m,n}$, an
algorithm exploiting the Mordell--Weil group $E_{m,n}(\QQ)$, and
the output of running the algorithm against an empirical database
of $61{,}829$ primitive Master-Hits.
\end{enumerate}

\medskip\noindent\textbf{Status of results.} Conjecture~\ref{conj:blocker}
is verified empirically on the $151{,}575$ Master-Hits in our database
for which $f_1$ has been completely factorised: in every case there
exists at least one prime $\ell\mid f_1$ with $v_\ell(f_1) = 1$ and
$\ell$ coprime to all $29$ canonical parameters. Two natural
strengthenings of the statement fail (Remark~\ref{rem:largest-fails}):
asking that the \emph{largest} outside-parameter prime be a blocker
admits explicit counterexamples (Appendix~\ref{app:largest-fails-counterexamples}),
and dropping the exponent-one condition would weaken the obstruction
to mere odd exponent.
Conjecture~\ref{conj:blocker} as stated is the strongest form supported by
the empirical evidence. We regard it as the main structural conjecture
suggested by the data, and leave its proof open. In the present paper
we treat the blocker existence as a working hypothesis whose
contrapositive is the perfect-cuboid obstruction. A complementary
line of attack --- establishing perfect-cuboid non-existence
\emph{rigorously} on a finite list of explicit fibers via a
genus-three Klein-four quotient construction --- is pursued in the
author's earlier work~\cite{peschmann-paper1}; the methods of that
paper are independent of, and orthogonal to, the constructive
Mordell--Weil generator used here.
The Mordell--Weil construction is a constructive algorithm with
rigorous output: every brick produced is a true body cuboid
(Theorem~\ref{thm:mw-correctness}). The non-existence of a perfect
cuboid in our $1{,}284{,}670$ generated bricks is rigorous.

\medskip\noindent\textbf{Outline.}
Section~\ref{sec:setup} fixes the algebraic setup and the
Master-Hit equation. Section~\ref{sec:fibration} introduces the
elliptic fibration $E_{m,n}$ and its quartic-to-Weierstrass
transformation. Section~\ref{sec:blocker} states the odd-exponent
blocker conjecture together with its finite computational
verification on $\mathcal{D}_{\mathrm{fact}}$; \S\ref{ssec:canonical}
establishes the canonical decomposition $f_1 = g_0^2(\xi^2+\eta^2)$ and
formulates Conjecture~\ref{conj:E1}, an elementary number-theoretic
statement to which the entire blocker phenomenon reduces. The empirical status is summarised in
\S\ref{ssec:empirical}.
Section~\ref{sec:mw} gives the Mordell--Weil generator algorithm
and its output. Section~\ref{sec:empirics} describes the empirical
database and the Family classification. Section~\ref{sec:open}
records open questions.

\section{Setup and notation}\label{sec:setup}

We use the Euclid parametrisation in master-tuple form, introduced
in the author's earlier work~\cite{peschmann-paper1}: every body
cuboid considered here arises from a quadruple $(a, b, m, n)$ of
positive integers via the explicit edge formulas below. The
parametrisation is the foundation on which the Mordell--Weil
generator of Section~\ref{sec:mw} is built.

\paragraph{Notation.} We write $\QQ_{>0}^\square :=
\{\, q \in \QQ : q > 0 \text{ and } q = r^2 \text{ for some } r \in \QQ\,\}$
for the set of positive rational squares; the algorithmic lifting
test of \S\ref{sec:fibration} amounts to checking $\tau(P) \in
\QQ_{>0}^\square$ for a candidate point $P$.

Let $(a,b)$ and $(m,n)$ be coprime pairs of positive integers with
\[
  a > b > 0,\quad m > n > 0,\quad a-b\text{ odd},\quad m-n\text{ odd}.
\]
Write
\[
  U_1 = a^2-b^2,\quad V_1 = 2ab,\quad W_1 = a^2+b^2,
\]
\[
  U_2 = m^2-n^2,\quad V_2 = 2mn,\quad W_2 = m^2+n^2.
\]
Then $(U_i,V_i,W_i)$ is a primitive Pythagorean triple with
$U_i^2 + V_i^2 = W_i^2$. Note that $U_i$ is odd (since $a - b$ and
$m - n$ are odd, $U_i = (a-b)(a+b)$ resp.\ $(m-n)(m+n)$ is the
product of two odd numbers in one factor and an even-difference
sum in the other), $V_i$ is even (the explicit factor $2$), and
$W_i$ is odd. The associated body cuboid candidate has edges
\begin{equation}\label{eq:edges}
  x = U_1 U_2,\quad y = V_1 U_2,\quad z = U_1 V_2.
\end{equation}
Two of the three face diagonals are automatically integer:
$\sqrt{x^2+y^2} = W_1 U_2$ and $\sqrt{x^2+z^2} = U_1 W_2$. The third
face diagonal $\sqrt{y^2+z^2}$ is integer iff
\begin{equation}\label{eq:master}
  M(a,b,m,n) := (V_1 U_2)^2 + (U_1 V_2)^2
\end{equation}
is a perfect square in $\ZZ$.

\begin{definition}\label{def:master}
A tuple $(a,b,m,n)\in\NN^4$ satisfying the coprimality and parity
conditions above and with $M(a,b,m,n)=\square$ is called a
\emph{Master-Hit}. We write $\mathcal{M}$ for the set of Master-Hits.
\end{definition}

The space-diagonal condition is governed by
\begin{equation}\label{eq:f1}
  f_1(a,b,m,n) := (W_1 U_2)^2 + (U_1 V_2)^2.
\end{equation}
A Master-Hit gives a \emph{perfect} cuboid iff $f_1=\square$.

\begin{remark}\label{rem:gauss}
The integer $f_1$ is the Gaussian norm of
$\zeta := W_1 U_2 + i\,U_1 V_2 \in\Zi$. Note that $W_1 U_2$ is odd
(both $W_1$ and $U_2$ are odd) and $U_1 V_2$ is even (since
$2\mid V_2$); consequently $f_1 = (W_1 U_2)^2 + (U_1 V_2)^2$ is
odd, and the prime $\ell = 2$ never divides $f_1$. Since
$f_1 = N(\zeta)$ has $v_\ell(f_1) = v_\pi(\zeta) + v_{\bar\pi}(\zeta)$
for every split prime $\ell = \pi\bar\pi \equiv 1\pmod 4$ in $\Zi$,
we have $f_1 = \square$ iff for every such $\ell$ the two Gaussian
valuations agree modulo $2$,
\[
  v_\pi(\zeta) \;\equiv\; v_{\bar\pi}(\zeta) \pmod 2.
\]
Primes $\ell \equiv 3 \pmod 4$ are inert in $\Zi$ and automatically
appear with even exponent in any norm.
\end{remark}

\begin{remark}[Scope of the parametrisation]\label{rem:scope}
Equation~\eqref{eq:edges} produces a body cuboid from each Master-Hit.
We do not claim the converse, i.e.\ that every primitive body cuboid
arises from a Master-Hit. The class of body cuboids covered here is
the one constructible from two coprime Pythagorean pairs in the
standard Euclid form; this includes all infinite families known to
the author.
\end{remark}

\begin{definition}\label{def:blocker}
A rational prime $\ell\mid f_1$ is called a \emph{blocker} of
$(a,b,m,n)$ if $v_\ell(f_1)$ is odd. Equivalently, $\ell$ is one of
the splitting primes
$\ell\equiv 1\pmod 4$ for which the Gaussian valuations of $\zeta$ are
unequal modulo $2$. We write $\mathrm{Blk}(a,b,m,n)$ for the set of
blockers.
\end{definition}

The basic obstruction is the equivalence
\[
  f_1=\square \iff \mathrm{Blk}(a,b,m,n)=\emptyset.
\]
Conjecture~\ref{conj:blocker} of Section~\ref{sec:blocker} asserts that
$\mathrm{Blk}(a,b,m,n)$ is in fact never empty for a Master-Hit.

\paragraph{Provenance and family naming convention.}
For the database we accompany each Master-Hit with two metadata
labels, intended for the public release:
\begin{itemize}[leftmargin=2em,itemsep=2pt]
  \item a \emph{family} tag identifying the algebraic family that
  contains the brick---one of \texttt{Saunderson}, \texttt{Lenhart},
  \texttt{Himane-T1}, \texttt{Himane-T2}, \texttt{Himane-T3},
  \texttt{Euler}, or \texttt{Sporadic}. A brick may carry several
  family tags simultaneously.
  \item a \emph{provenance} tag identifying the algorithm that
  discovered the brick. The provenance is hierarchical:
  \texttt{Exhaustive-Bound-N} for classical exhaustive searches up
  to bound $N$, \texttt{Rathbun-Search} for the catalogue of
  Rathbun~\cite{rathbun}, \texttt{Saunderson-Generator} for the
  closed-form parametrisation, and \texttt{MW-m-n} for the
  Mordell--Weil generator on the fibre $E_{m,n}$ of
  Section~\ref{sec:fibration}.
\end{itemize}
We use these labels throughout the paper. For example, ``the
$93{,}067$ bricks with provenance \texttt{MW-2368-1207}'' refers to
the output of the Mordell--Weil generator $\mathcal{A}_{\mathrm{MW}}$
of Section~\ref{sec:mw} applied to the fibre $E_{2368,1207}$.

\paragraph{Canonical expressions in $(a,b,m,n)$.}
Throughout the paper we use the following list of $29$ \emph{canonical
expressions} associated with the tuple $(a,b,m,n)$:
\begin{equation}\label{eq:canonical-list}
  \begin{aligned}
    \mathcal{L}(a,b,m,n) := \bigl(\,&
    a,\ b,\ m,\ n,\ a+b,\ a-b,\ m+n,\ m-n,\\
    & a^2+b^2,\ a^2-b^2,\ m^2+n^2,\ m^2-n^2,\\
    & ab,\ mn,\ 2ab,\ 2mn,\\
    & W_1 U_2,\ U_1 V_2,\ W_1 V_2,\ V_1 U_2,\ U_1 U_2,\ V_1 V_2,\ W_1 W_2,\\
    & U_1,\ V_1,\ W_1,\ U_2,\ V_2,\ W_2\,\bigr).
  \end{aligned}
\end{equation}
The list $\mathcal{L}$ is ordered and contains $29$ entries, but
six of these are pairwise integer-identical with the abbreviations
$U_i, V_i, W_i$ on the last line:
\[
  a^2-b^2 = U_1,\quad
  a^2+b^2 = W_1,\quad
  2ab = V_1,\quad
  m^2-n^2 = U_2,\quad
  m^2+n^2 = W_2,\quad
  2mn = V_2.
\]
The number of \emph{distinct} positive integers in
$\mathcal{L}(a,b,m,n)$ is therefore $23$. We write
\[
  \mathcal{P}(a,b,m,n) := \{\,\text{distinct positive integers
  occurring in } \mathcal{L}(a,b,m,n)\,\},
\]
a finite subset of $\NN$ of size $|\mathcal{P}| = 23$ depending on
$(a,b,m,n)$. The redundant ordered-list form of $\mathcal{L}$ is
retained throughout because each of its twenty-nine entries plays a
distinct \emph{algebraic} role at some point in the paper, even when
the entries collapse to the same integer value. Conjecture~\ref{conj:blocker}
below requires a blocker prime $\ell$ with $\gcd(\ell, N) = 1$ for
every $N \in \mathcal{P}$.

\section{The elliptic fibration over the \texorpdfstring{$(m,n)$}{(m,n)}-plane}
\label{sec:fibration}

Fix coprime $(m,n)$ with $m>n>0$ and $m-n$ odd. We view the Master
condition $M=\square$ as an equation in $(a,b)$. Setting $t = a/b$ and
$s = Q/b^2$ where $Q^2 = M$, equation~\eqref{eq:master} divided by
$b^4$ becomes
\begin{equation}\label{eq:quartic}
  H_{m,n}\colon\ s^2 \;=\; V_2^2\, t^4 + (4U_2^2 - 2V_2^2)\, t^2 + V_2^2.
\end{equation}
This is a smooth quartic in $\QQ(t)$ with the rational point
$(t,s)=(0,V_2)$, hence a curve of genus $1$ with a rational
point---an elliptic curve.

\subsection{Quartic-to-Weierstrass map}
\label{ssec:birational}

Set $\gamma := V_2 = 2mn$, $A := V_2^2 = \gamma^2$, $B := 4U_2^2 - 2V_2^2$,
$C := V_2^2$. Note that $A = C = \gamma^2$, so the quartic
$A t^4 + B t^2 + C$ has equal leading and constant coefficient, and
in particular has the rational point $(t, s) = (0, \gamma)$.

We treat the quartic-to-Weierstrass transformation as an
\emph{algorithmic normalisation}, following~\cite[\S 1.3]{cassels}:
applying PARI/GP's \texttt{ellfromeqn} to the polynomial
$A t^4 + B t^2 + C - s^2$ produces a Weierstrass model
\begin{equation}\label{eq:wei}
  E_{m,n}\colon\ Y^2 = X^3 + a_2 X^2 + a_4 X + a_6,
\end{equation}
with rational coefficients $a_2, a_4, a_6\in\QQ$ depending on
$U_2, V_2$. We do not record the closed-form coefficients here;
they are computed by the worker
\texttt{paper2/}\allowbreak\texttt{generation/}\allowbreak\texttt{mw\_fibre\_worker.sage}
and used as a black box. The two formulas required for our
arguments are an $X$-projection
$\phi_{m,n}\colon H_{m,n}\dashrightarrow E_{m,n}$ and a
rational function $\tau\colon E_{m,n}\dashrightarrow \mathbb{A}^1_{\QQ}$
extracted from $\phi_{m,n}^{-1}$:
\begin{equation}\label{eq:fwd}
  X(t, s) \;=\; \frac{2\gamma(s+\gamma)}{t^2},
\end{equation}
\begin{equation}\label{eq:inv}
  \tau(X) \;=\; \frac{4\gamma^2\,(X + B)}{X^2 - 4\gamma^4}.
\end{equation}
Both expressions are symmetric in $t\mapsto -t$: equation
\eqref{eq:fwd} depends only on $t^2$, and the rational function
$\tau$ in~\eqref{eq:inv} returns the value of $t^2(P)$ for any
$P\in E_{m,n}(\QQ)$ at which it is defined. A point $P$ lifts to
an admissible Euclid parameter $t = a/b\in\QQ^\times$ exactly when
$\tau(P)\in\QQ_{>0}^{\square}$
(Lemma~\ref{lem:correspondence}); recovering the sign of $t$ would
require the $Y$-coordinate of $\phi_{m,n}^{-1}$, which we do not
use, since master tuples are equivalent under $t\mapsto -t$.
Throughout the paper we use only \eqref{eq:fwd}, \eqref{eq:inv},
and the exact integer Master-Hit checks of Section~\ref{sec:mw}.
Equation~\eqref{eq:inv} is undefined precisely at the values
$X = \pm 2\gamma^2$, which are the $X$-coordinates of two
$2$-torsion points of $E_{m,n}$; we collect them in a set
$\mathcal{T}_\tau\subset E_{m,n}[2]$. The point at infinity $O$ of
$E_{m,n}$ is the image of $(t, s) = (0, \gamma)$ under $\phi_{m,n}$.

\begin{lemma}\label{lem:correspondence}
A point $P\in E_{m,n}(\QQ)\setminus(\{O\}\cup\mathcal{T}_\tau)$ lifts
to a rational solution $t\in\QQ^\times$ of the quartic $H_{m,n}$
if and only if $\tau(P)\in\QQ_{>0}^{\square}$; the lift is unique
up to the sign ambiguity $t\mapsto -t$. We use this $\tau$-square
condition as the algorithmic lifting test.

For such a $t = a/b$ written in lowest terms with $b > 0$, write
$|a/b| = a'/b'$ with $a', b' > 0$ coprime. Then $(a', b', m, n)$ is
a Master-Hit if and only if $a' > b'$ and $a' - b'$ is odd.
\end{lemma}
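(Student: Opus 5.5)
The plan is to avoid relying on any unstated property of the black-box model $E_{m,n}$ and to work directly with the two explicit formulas \eqref{eq:fwd} and \eqref{eq:inv}. The key step is an algebraic identity showing that $\tau$ inverts the $X$-projection on the nose. Let $(t,s)$ be a point of $H_{m,n}$ with $t\neq 0$. Factor the quartic as
\[
(s-\gamma)(s+\gamma)=t^2(\gamma^2t^2+B).
\]
Setting $X=2\gamma(s+\gamma)/t^2$ gives
\[
s+\gamma=\frac{Xt^2}{2\gamma},\qquad s-\gamma=\frac{2\gamma(\gamma^2t^2+B)}{X},
\]
where the second formula assumes $X\neq 0$. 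Subtracting the two and clearing denominators yields
\[
t^2\,(X^2-4\gamma^4)=4\gamma^2(X+B).
\]
Hence $\tau(X)=t^2$ whenever $X\neq\pm2\gamma^2$, which is exactly why those two values are excluded via $\mathcal{T}_\tau$. The identity is reversible: start from a rational $X\notin\{\pm2\gamma^2\}$ and a rational $t\neq0$ with $t^2=\tau(X)$, and define
\[
s:=\frac{Xt^2}{2\gamma}-\gamma\in\QQ.
\]
Running the same computation backwards shows that $(s-\gamma)(s+\gamma)=t^2(\gamma^2t^2+B)$, so $(t,s)\in H_{m,n}(\QQ)$.

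With this identity the first assertion is immediate. If $P$ lifts to some $t\in\QQ^\times$, then $\tau(P)=t^2$ is a nonzero rational square, hence lies in $\QQ_{>0}^\square$. Conversely, if $\tau(P)=r^2$ with $r\in\QQ$, then $r\neq0$ because elements of $\QQ_{>0}^\square$ are nonzero, and $t=\pm r$ together with the $s$ defined above gives the lift. Uniqueness up to sign holds because $t^2=\tau(X(P))$ is determined by $P$, and $s$ is then determined by $X$ and $t^2$. I will remark that only the $X$-coordinate of $P$ enters, in keeping with the convention stated just before the lemma. The point $O$ is excluded because $X=\infty$ there, corresponding to $t=0$.

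For the second assertion I will use the homogenisation
\[
M(a,b,m,n)=4a^2b^2U_2^2+(a^2-b^2)^2V_2^2=b^4\bigl(V_2^2t^4+(4U_2^2-2V_2^2)t^2+V_2^2\bigr),
\]
valid with $t=a/b$. Take $t=\pm a'/b'$ in lowest terms. Since the quartic is even in $t$, we get $M(a',b',m,n)=(b'^2s)^2$. An integer that equals the square of a rational number is the square of an integer, so $M(a',b',m,n)$ is a perfect square in $\ZZ$. The pair $(m,n)$ already satisfies the fibre hypotheses, namely $m>n>0$, $m-n$ odd and $\gcd(m,n)=1$, and $\gcd(a',b')=1$ holds by construction. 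Therefore the only remaining conditions in Definition~\ref{def:master} are $a'>b'$ and $a'-b'$ odd, which gives the stated equivalence.

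I expect the main obstacle to be the bookkeeping of degenerate cases rather than any conceptual difficulty. The value $X=0$ cannot arise from a point with $t\neq0$, since it would force $s=-\gamma$ and hence $\gamma^2t^2+B=0$. However, $X=0$ should be checked separately in the converse direction, where I will multiply through by $X$ before dividing and note that $X=0$ gives $\tau=B/(-\gamma^2)$. One must also make sure that nothing in the argument depends on the unrecorded Weierstrass coefficients, which holds because the lemma is phrased entirely in terms of the explicit formulas \eqref{eq:fwd} and \eqref{eq:inv}.
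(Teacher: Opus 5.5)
Your proof is correct. It has the same skeleton as the paper's: the identity $\tau(X(t,s))=t^2$, then the two directions, the sign ambiguity, and the Master-Hit conditions. The decisive steps, however, are carried out differently, and each time your version proves what the paper only asserts.

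\begin{itemize}
\item \emph{The identity.} The paper calls it a ``direct algebraic verification''; you actually derive it from $(s-\gamma)(s+\gamma)=t^2(\gamma^2t^2+B)$.
\item \emph{The ``if'' direction.} The paper takes ``a rational $s$ from $s^2=f(t)$''. This presupposes that $f(t)$ is a rational square, and it leaves open which sign of $s$ returns $X(P)$. The choices $\pm s$ give the two roots of $\tau(X)=t^2$: for $(m,n)=(2,1)$ and $t=1$, $s=\pm 6$ gives $X=80$ and $X=-16$, both with $\tau=1$. Your linear inversion $s=Xt^2/(2\gamma)-\gamma$ settles rationality, the sign, and $X(t,s)=X(P)$ at once.
\item \emph{Uniqueness.} Your argument that $s$ is determined by $X$ and $t^2$ gives genuine uniqueness. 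The paper's involution argument only exhibits $-t$ as a second lift and does not address this.
\item \emph{The second assertion.} Your identity $M(a',b',m,n)=(b'^2s)^2$ proves $M=\square$, where the paper only says the conditions are ``checked arithmetically''.
\end{itemize}

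What your $X$-only route gives up is the $Y$-coordinate. With $Y=2\gamma(2\gamma(s+\gamma)+Bt^2)/t^3$ one checks that $Y^2=(X+B)(X^2-4\gamma^4)$. So on this model $\tau=(2\gamma Y/(X^2-4\gamma^4))^2$ and $t=2\gamma Y/(X^2-4\gamma^4)$. Consequently the square test is automatic for every $P\notin E_{m,n}[2]$, and the sign of $t$ is recovered as well.

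One small repair. Your claim that $X=0$ cannot occur with $t\neq0$ is true, but your stated reason stops short: you do not rule out $\gamma^2t^2+B=0$. That case would require $-m^4+4m^2n^2-n^4$ to be a square, which fails because it is $\equiv 3\pmod 4$ when $m-n$ is odd. It is simpler never to divide by $X$. Put $s-\gamma=Xt^2/(2\gamma)-2\gamma$, expand $(s+\gamma)(s-\gamma)$, and divide by $t^2$. This gives $t^2(X^2-4\gamma^4)=4\gamma^2(X+B)$ in both directions with no case split.
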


\begin{proof}
\emph{The map and its inverse.} The construction
of~\cite[\S 1.3]{cassels} converts a degree-$4$ curve
$s^2 = f(t)$ with rational base point at $(t_0, s_0) = (0, \gamma)$
into a Weierstrass cubic by the substitution
\[
  X(t,s) = \frac{2\gamma(s + \gamma)}{t^2}, \qquad
  Y(t,s) = \frac{2\gamma\,(2\gamma\,s' \,t - (s+\gamma) f'(t))}{t^3},
\]
with $s' = s$, $f$ as in~\eqref{eq:quartic}; this is exactly
$\phi_{m,n}$ of~\eqref{eq:fwd}. Writing $s + \gamma = t^2 X / (2\gamma)$
and using $s^2 = f(t)$, one computes that
\[
  \tau(X) := \frac{4\gamma^2(X + B)}{X^2 - 4\gamma^4}
\]
satisfies $\tau(\phi_{m,n}(t,s)) = t^2$ identically as a rational
function on $H_{m,n}$. (This is a direct algebraic verification:
substitute the formula for $X(t,s)$ into the right-hand side and
simplify using $s^2 = V_2^2 t^4 + (4U_2^2 - 2 V_2^2)t^2 + V_2^2$.)
The poles $X = \pm 2\gamma^2$ of $\tau$ correspond to vanishing of
the cubic discriminant on the Weierstrass side, hence to
$2$-torsion points of $E_{m,n}$; this is the inclusion
$\mathcal{T}_\tau \subset E_{m,n}[2]$.

\emph{If direction.} Suppose $P \in E_{m,n}(\QQ) \setminus
(\{O\} \cup \mathcal{T}_\tau)$ with $\tau(P) = (a/b)^2 \in
\QQ_{>0}^\square$ where $\gcd(a, b) = 1$, $a, b > 0$. Setting
$t = a/b$ in~\eqref{eq:quartic} gives a rational $s$ from
$s^2 = f(t)$, so $(t, s)$ is a $\QQ$-point of $H_{m,n}$, and
$\phi_{m,n}(t, s) = P$ since $\tau \circ \phi_{m,n}$ is the identity
on $t^2$ and signs cancel as below.

\emph{Only-if direction.} Conversely, if $P$ lifts to a rational
$(t, s) \in H_{m,n}(\QQ)$ with $t \in \QQ^\times$, then
$\tau(P) = t^2$ is a rational positive square by construction.

\emph{Sign ambiguity.} For each rational square value
$(a/b)^2$ of $\tau$ there are exactly two rational lifts $\pm a/b$,
since the involution $t \mapsto -t$ sends $(t, s)$ to $(-t, s)$
on $H_{m,n}$, hence preserves $X(t, s) = 2\gamma(s+\gamma)/t^2$.

\emph{Master-Hit conditions.} For a lift $t = a'/b'$ in lowest
terms with $a', b' > 0$, the conditions
$a' > b'$, $\gcd(a', b') = 1$, and $a' - b'$ odd
(Definition~\ref{def:master}) are independent of $P$ and concern
only the reduced representative of $|t|$. They are checked
arithmetically once the lift has been recovered.
\end{proof}

\noindent
The Mordell--Weil generator of Section~\ref{sec:mw} produces
candidate points $P\in E_{m,n}(\QQ)$ via Lemma~\ref{lem:correspondence}
and verifies the Master-Hit conditions on each candidate by exact
integer arithmetic.

\subsection{High-rank fibres}\label{ssec:high-rank}

We compute Mordell--Weil generators for each fibre using Sage's
\texttt{E.gens()} with PARI second-descent. With descent limit $20$ we
obtain a provably correct lower bound on the rank, and on six fibres
we observe rank at least~$5$.

\begin{table}[ht]
\caption{Highest-rank fibres found in our database. The column ``hits''
counts Master-Hits in the published database with provenance
$\texttt{MW-}m\texttt{-}n$.}
\label{tab:high-rank}
\begin{center}
\begin{tabular}{l c r r}
\toprule
provenance group & rank found & gens & hits in DB\\
\midrule
\texttt{MW-2368-1207}  & $\ge 5$ & $5$ & $93{,}067$\\
\texttt{MW-1863-638}   & $\ge 5$ & $5$ & $55{,}078$\\
\texttt{MW-1136-99}    & $\ge 5$ & $5$ & $27{,}011$\\
\texttt{MW-792-95}     & $\ge 5$ & $5$ & $24{,}416$\\
\texttt{MW-1239-832}   & $\ge 5$ & $5$ & $23{,}756$\\
\texttt{MW-1112-131}   & $\ge 5$ & $5$ & $16{,}352$\\
\texttt{MW-640-317}    & $\ge 4$ & $4$ & $15{,}394$\\
\texttt{MW-3293-832}   & $\ge 4$ & $4$ & $13{,}844$\\
\texttt{MW-608-77}     & $\ge 4$ & $4$ & $14{,}684$\\
\texttt{MW-736-429}    & $\ge 4$ & $4$ & $10{,}670$\\
\texttt{MW-88-7}       & $3$     & $3$ & $5{,}213$\\
\bottomrule
\end{tabular}
\end{center}
\end{table}

\noindent
The six fibres of provable rank $\ge 5$ are a new finding obtained
when we increased the second-descent limit to $20$: with the default
descent limit, Sage reported rank $4$ on these fibres and missed the
fifth generator. We did not attempt to verify that rank equals
exactly $5$; the bound is only a lower bound from the search. As a
benchmark, $E_{88,7}$ has Weierstrass model
\[
  y^2 + xy = x^3 - 71{,}221{,}066{,}018{,}500\, x
       + 230{,}819{,}306{,}124{,}825{,}756{,}432
\]
(an isomorphic minimal model), conductor $2{,}532{,}242{,}371{,}890$,
torsion subgroup $\mathbb{Z}/2 \oplus \mathbb{Z}/4$, and rank exactly
$3$ (Magma~\cite{magma}'s \texttt{RankBounds} returns $[3, 3]$ in
well under one second).

\newpage
\section{Odd-exponent blocker conjecture and computational verification}\label{sec:blocker}

\subsection{The conjecture and its computational verification}\label{ssec:conj-and-verification}

We state the central conjecture together with its main empirical
content as a computational theorem. We do not claim a proof of the
conjecture.

\begin{conjecture}[Universal exponent-one blocker]\label{conj:blocker}
For every Master-Hit $(a,b,m,n)\in\mathcal{M}$ there exists a prime
$\ell\mid f_1$ such that
\begin{enumerate}[label=(\roman*),leftmargin=2.5em]
  \item $v_\ell(f_1) = 1$, i.e.\ $\ell$ divides $f_1$ exactly once, and
  \item $\gcd(\ell, N) = 1$ for every $N\in\mathcal{P}(a,b,m,n)$.
\end{enumerate}
\end{conjecture}

The principal computational result of this paper is the verification
of Conjecture~\ref{conj:blocker} on a large finite dataset.

\begin{verification}[Exponent-one blocker on $\mathcal{D}_{\mathrm{fact}}$]\label{thm:finite-verification}
Let $\mathcal{D}_{\mathrm{fact}}$ denote the set of Master-Hits in
our database for which $f_1$ has been completely factorised, with
the stored factorisations re-verified by exact integer
multiplication. Then $|\mathcal{D}_{\mathrm{fact}}| = 151{,}575$,
and Conjecture~\ref{conj:blocker} holds for every
$(a,b,m,n)\in\mathcal{D}_{\mathrm{fact}}$.
\end{verification}

\begin{proof}
The verification is entirely computational: for every record we
load the factorisation $f_1 = \prod_i \ell_i^{e_i}$ from the
database, identify the primes $\ell_i$ with $e_i = 1$, and check
that at least one such $\ell_i$ is coprime to every element of
$\mathcal{P}(a,b,m,n)$. The check is reproduced by the script
\texttt{paper2/}\allowbreak\texttt{analysis/theorem\_check.py}; see
Section~\ref{ssec:reproducibility}.
\end{proof}

\subsection{Why the exponent-one condition matters}\label{ssec:why-exp1}

A blocker is any prime divisor of $f_1$ occurring to odd exponent,
hence primes of exponent $3, 5, \ldots$ would already obstruct
$f_1$ from being a square. The exponent-one condition is therefore
strictly stronger than the obstruction needed.

\begin{remark}[Exponent one is not cosmetic]\label{rem:why-exp1}
The empirical phenomenon observed here is sharper than ``$f_1$ has
some odd-exponent prime divisor''. After removing all primes
supported by the canonical parameter list $\mathcal{P}(a,b,m,n)$,
the obstruction can in every verified case be chosen \emph{primitive},
i.e.\ with $v_\ell(f_1) = 1$. A blocker of exponent~$3$, for
example, would factor as $\ell\cdot\ell^2$, contributing a genuine
asymmetric Gaussian prime $\ell$ alongside a square companion
$\ell^2$; the exponent-one condition removes the square companion.
The phenomenon thus says that $f_1$ fails to be a square in the
most elementary possible way at a generic split prime, even in
large-parameter families where small prime values do appear.

In the language of the Gaussian factorisation
$\zeta = W_1 U_2 + i\,U_1 V_2$ (Remark~\ref{rem:asymmetric}),
Conjecture~\ref{conj:blocker} asserts the existence of a Gaussian
prime $\pi$ with $\pi\bar\pi = \ell$ such that
$v_\pi(\zeta) = 1$, $v_{\bar\pi}(\zeta) = 0$, and $\ell$ coprime
to $\mathcal{P}$.
\end{remark}

\subsection{Counterexamples to two natural variants}\label{ssec:variants}

The exponent-one condition cannot be replaced by either ``the
\emph{largest} outside-$\mathcal{P}$ prime is a blocker'' (a
strengthening) or ``the \emph{smallest} outside-$\mathcal{P}$
blocker has exponent one'' (an apparent intermediate strengthening
of the existence claim). Both fail empirically.

\begin{remark}[Failure of the largest-prime strengthening]\label{rem:largest-fails}
The strict strengthening that requires the \emph{largest} prime
divisor of $f_1$ outside $\mathcal{P}$ to be a blocker is false:
three explicit counterexamples are listed in
Appendix~\ref{app:largest-fails-counterexamples}. The exponent-one
blocker required by Conjecture~\ref{conj:blocker} exists in each of
these hits, but is smaller in magnitude than some symmetric-exponent
prime.
\end{remark}

\begin{remark}[Failure of the smallest-blocker strengthening]\label{rem:smallest-fails}
A different strengthening would assert that the \emph{smallest}
outside-$\mathcal{P}$ blocker (the smallest prime $\ell\mid f_1$
with $v_\ell(f_1)$ odd and $\ell$ coprime to $\mathcal{P}$) has
exponent exactly~$1$. This too fails: among the
$151{,}575$ records of $\mathcal{D}_{\mathrm{fact}}$, exactly
$242$ have a smallest outside-$\mathcal{P}$ blocker of exponent $\ge 3$.
For instance, hit $200$ with $(a, b, m, n) = (55, 48, 44, 9)$ has
$13^3$ as its smallest outside-$\mathcal{P}$ prime of odd
exponent. In each of these $242$ records, an exponent-one blocker
required by Conjecture~\ref{conj:blocker} still exists, but is a
\emph{larger} prime than the smallest odd-exponent prime outside
$\mathcal{P}$.
\end{remark}

\begin{remark}[Removing the exponent-one condition]\label{rem:no-exp1}
Replacing condition~(i) of Conjecture~\ref{conj:blocker} by the
weaker ``$v_\ell(f_1)$ is odd'' yields a conjecture that is
consistent with the data (since exponent~$1$ is a special case of
odd exponent), but does not isolate the same clean Gaussian-prime
obstruction. We do not adopt this weaker formulation; the
exponent-one statement is the sharper empirical phenomenon.
\end{remark}

The basis form follows trivially from Conjecture~\ref{conj:blocker}.

\begin{conjecture}[Basis form]\label{conj:basis}
Every Master-Hit has at least one blocker.
\end{conjecture}

\begin{corollary}[Conditional]\label{cor:noperfect}
Assume Conjecture~\ref{conj:basis}. Then no Master-Hit is a perfect
cuboid; equivalently, $f_1\ne\square$ for every $(a,b,m,n)\in\mathcal{M}$.
\end{corollary}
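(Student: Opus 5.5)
The corollary follows directly from the basic obstruction recorded after Definition~\ref{def:blocker}, namely $f_1=\square \iff \mathrm{Blk}(a,b,m,n)=\emptyset$. I would first prove that equivalence, then apply Conjecture~\ref{conj:basis}.

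For the equivalence, fix a Master-Hit $(a,b,m,n)\in\mathcal{M}$, so $f_1>0$ is an integer. A positive integer is a perfect square if and only if every prime occurs in it to even exponent. This follows from unique factorisation in $\ZZ$: if $f_1=\prod \ell^{e_\ell}$ with all $e_\ell$ even, then $f_1=\bigl(\prod \ell^{e_\ell/2}\bigr)^2$. Conversely, $f_1=r^2$ forces $e_\ell = 2v_\ell(r)$ for every $\ell$. By Definition~\ref{def:blocker}, $\mathrm{Blk}(a,b,m,n)$ is precisely the set of primes $\ell\mid f_1$ with $v_\ell(f_1)$ odd. Hence $f_1=\square$ if and only if this set is empty. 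The Gaussian reformulation in Remark~\ref{rem:gauss} is not needed, though it is consistent with this: it only explains which primes can occur in $\mathrm{Blk}$.

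Now assume Conjecture~\ref{conj:basis}. Then for every $(a,b,m,n)\in\mathcal{M}$ we have $\mathrm{Blk}(a,b,m,n)\neq\emptyset$, so $f_1$ has a prime of odd exponent and $f_1\neq\square$. Since a Master-Hit gives a perfect cuboid exactly when $f_1=\square$ (see \eqref{eq:f1} and the surrounding discussion), no Master-Hit is a perfect cuboid. The same reasoning gives the stated equivalence of the two formulations: by the definition of a Master-Hit, the two edge-side face diagonals are integers automatically and $M=\square$, so the only remaining condition for a perfect cuboid is $f_1=\square$.

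There is no real obstacle here. The only point needing care is to keep the claim within its stated scope. Because of Remark~\ref{rem:scope}, the corollary concerns perfect cuboids arising from Master-Hits via \eqref{eq:edges}, not all perfect cuboids, and it should be phrased that way.
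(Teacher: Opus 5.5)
Your proof is correct and follows the same route as the paper: combine the equivalence $f_1=\square \iff \mathrm{Blk}(a,b,m,n)=\emptyset$ with the non-emptiness given by Conjecture~\ref{conj:basis}. The one difference is that you also justify that equivalence from unique factorisation and check that $f_1=x^2+y^2+z^2$, whereas the paper simply takes the equivalence as given.
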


\begin{proof}
$f_1=\square$ is equivalent to $\mathrm{Blk}(a,b,m,n)=\emptyset$.
Conjecture~\ref{conj:basis} says
$\mathrm{Blk}(a,b,m,n)\ne\emptyset$, contradicting $f_1=\square$.
\end{proof}

\begin{verification}[Non-existence on $\mathcal{D}_{\mathrm{all}}$]\label{thm:emp-non-exist}
For every Master-Hit $(a,b,m,n)$ in our database
$\mathcal{D}_{\mathrm{all}}$ ($|\mathcal{D}_{\mathrm{all}}| = 1{,}284{,}670$),
the integer $x^2 + y^2 + z^2$ is not a perfect square. That is, no
Master-Hit in $\mathcal{D}_{\mathrm{all}}$ is a perfect cuboid.
\end{verification}

\begin{proof}
For every record, integer $\sqrt{x^2+y^2+z^2}$ is computed by
\texttt{math.isqrt} and squared back; equality is rejected in every
case. Reproduced quickly by
\texttt{paper2/}\allowbreak\texttt{analysis/perfect\_check.py}
on all $1{,}284{,}670$ records.
\end{proof}

\subsection{Gaussian anatomy of \texorpdfstring{$f_1$}{f1}}\label{ssec:gauss-anatomy}

The blocker set is most naturally read off the Gaussian factorisation
of
\[
  \zeta := W_1 U_2 + i\, U_1 V_2 \in \Zi,
\]
since $f_1 = N(\zeta) = \zeta\bar\zeta$ (Remark~\ref{rem:gauss}).
For an odd rational prime $p$ we set
\[
  \alpha_p := v_p(W_1 U_2),\qquad \beta_p := v_p(U_1 V_2),
\]
the $p$-adic valuations of the two real summands of $f_1$.

\begin{proposition}[$p$-adic asymmetry]\label{prop:p-adic}
Let $p$ be an odd prime and $(a,b,m,n)\in\mathcal{M}$. Then
\begin{enumerate}[label=(\roman*),leftmargin=2.5em]
  \item if $\alpha_p\ne\beta_p$, then
    $v_p(f_1) = 2\min(\alpha_p,\beta_p)$;
  \item if $\alpha_p=\beta_p$, then $v_p(f_1)\ge 2\alpha_p$.
\end{enumerate}
\end{proposition}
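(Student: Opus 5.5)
The plan is to use only the ultrametric property of the $p$-adic valuation, applied to the two summands of $f_1$. Write $X := W_1U_2$ and $Y := U_1V_2$, so that $f_1 = X^2 + Y^2$ with $v_p(X^2) = 2\alpha_p$ and $v_p(Y^2) = 2\beta_p$.

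First I check that both valuations are finite, so that $\alpha_p,\beta_p$ are well-defined non-negative integers. By the standing hypotheses $a>b>0$ and $m>n>0$, the factors $W_1 = a^2+b^2$, $U_2 = m^2-n^2$, $U_1 = a^2-b^2$ and $V_2 = 2mn$ are all positive integers. Hence $X$ and $Y$ are nonzero. Nothing about the Master-Hit condition $M=\square$ is used; the statement holds for every admissible tuple.

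For (i), by symmetry I assume $\alpha_p < \beta_p$. I factor out $p^{2\alpha_p}$ and write
\[
  f_1 = p^{2\alpha_p}\bigl(X'^2 + p^{2(\beta_p-\alpha_p)} Y'^2\bigr),
\]
where $X' = X/p^{\alpha_p}$ and $Y' = Y/p^{\beta_p}$ are $p$-adic units. The bracket is congruent to $X'^2 \not\equiv 0 \pmod p$, since $\beta_p-\alpha_p\ge 1$. Therefore $v_p(f_1) = 2\alpha_p = 2\min(\alpha_p,\beta_p)$. Equivalently, this is the strict ultrametric equality $v_p(u+w) = \min(v_p(u),v_p(w))$ when $v_p(u)\ne v_p(w)$.

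For (ii), with $\alpha_p=\beta_p=\alpha$, the same factorisation gives $f_1 = p^{2\alpha}(X'^2+Y'^2)$ with $X'^2+Y'^2\in\ZZ$. This yields $v_p(f_1)\ge 2\alpha$, and the inequality can be strict when $p \mid X'^2+Y'^2$.

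I do not expect a genuine obstacle; the only point needing care is the nonvanishing of $X$ and $Y$ in the first step. As an aside, in case (ii) strictness can only occur for $p\equiv 1\pmod 4$: if $p\equiv 3\pmod 4$, then $-1$ is a non-residue mod $p$, so $p\mid X'^2+Y'^2$ would force $p$ to divide both units, which is impossible. This is consistent with Remark~\ref{rem:gauss}, but it is not needed for the statement as posed.
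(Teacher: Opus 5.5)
Your proposal is correct and follows essentially the same route as the paper: set $X = W_1U_2$, $Y = U_1V_2$, apply the strict ultrametric equality to $X^2+Y^2$ when $2\alpha_p\neq 2\beta_p$, and take the trivial lower bound when the valuations coincide. Your additional checks are correct refinements that the paper leaves implicit: that $X,Y\neq 0$, so $\alpha_p,\beta_p$ are finite, and that strict inequality in (ii) can occur only for $p\equiv 1\pmod 4$.
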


\begin{proof}
Standard $p$-adic estimate: writing $X = W_1 U_2$, $Y = U_1 V_2$,
$f_1 = X^2 + Y^2$. If $v_p(X^2)\ne v_p(Y^2)$, then
$v_p(X^2 + Y^2) = \min(v_p(X^2), v_p(Y^2)) = 2\min(\alpha_p,\beta_p)$;
this proves~(i). If $v_p(X^2) = v_p(Y^2) = 2\alpha_p$, the leading
term may cancel, so only the lower bound $v_p(f_1)\ge 2\alpha_p$
holds.
\end{proof}

\begin{remark}\label{rem:asymmetric}
A Gaussian prime $\pi$ above a split prime $p\equiv 1\pmod 4$
contributes a blocker iff its exponents
$v_\pi(\zeta), v_{\bar\pi}(\zeta)$ are unequal modulo $2$ \emph{after}
descent to $\ZZ$. Equivalently, the prime $p\equiv 1\pmod 4$ is a
blocker iff $\zeta$ is \emph{asymmetric} at $p$ in $\Zi$. Inert
primes $p\equiv 3\pmod 4$ and the ramified $p=2$ contribute only
even exponents to $f_1$ and never block.
\end{remark}

The Gaussian anatomy suggests an empirical exclusion principle for
the smallest split prime, $p = 5$.

\begin{remark}[$5$ does not appear as an exponent-one blocker]\label{rem:no5}
On the dataset $\mathcal{D}_{\mathrm{fact}}$
(Verification~\ref{thm:finite-verification}), no Master-Hit satisfies
$v_5(f_1) = 1$: every record with $5\mid f_1$ has
$v_5(f_1)\ge 2$. This is a finite computational observation, not a
proof. A clean modular derivation from the Master condition
$M = \square$ together with $\gcd(a, b) = \gcd(m, n) = 1$ would
require a finer use of the Gaussian factorisation of $\zeta$ than
what a naive case analysis modulo~$5$ delivers (the equation
$\alpha^2 + \beta^2 \equiv 0 \pmod 5$ has nontrivial isotropic
solutions, e.g.\ $(\alpha, \beta) = (1, 2)$, so a direct argument is
not available). We do not pursue the question further; the
observation is included only because $5$ is the smallest split prime
and the empirical exclusion is striking, and is not used elsewhere
in the paper.
\end{remark}

\subsection{Canonical decomposition and Conjecture E1}\label{ssec:canonical}

A more powerful structural observation is that $f_1$ admits a
\emph{canonical squareful/squarefree decomposition} that captures
all blockers in a single step.

\begin{lemma}[Canonical decomposition]\label{lem:canonical}
Let $(a,b,m,n)\in\mathcal{M}$ and set
\[
  g_0 := \gcd(W_1 U_2,\ U_1 V_2),\qquad
  \xi := \frac{W_1 U_2}{g_0},\qquad
  \eta := \frac{U_1 V_2}{g_0}.
\]
Then $\gcd(\xi,\eta) = 1$ and
\begin{equation}\label{eq:canonical}
  f_1(a,b,m,n) \;=\; g_0^2\,(\xi^2 + \eta^2).
\end{equation}
\end{lemma}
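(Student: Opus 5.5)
This statement is elementary: it is the standard reduction of a sum of two squares by the gcd of its summands. The plan is to verify well-definedness of $g_0$, then coprimality of $\xi,\eta$, then the identity.

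\emph{Well-definedness.} First check that $g_0$ is a positive integer, so that $\xi,\eta$ are integers. Both summands are nonzero for a Master-Hit, since $a>b>0$ and $m>n>0$ give $U_1,U_2,V_2,W_1>0$. Hence $\gcd(W_1U_2,\,U_1V_2)\ge 1$ is defined and divides both entries. Consequently $\xi=W_1U_2/g_0$ and $\eta=U_1V_2/g_0$ are positive integers.

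\emph{Coprimality.} Argue by contradiction. Suppose a prime $p$ divides both $\xi$ and $\eta$. Then $pg_0$ divides $W_1U_2=g_0\xi$ and $U_1V_2=g_0\eta$. So $pg_0$ is a common divisor strictly larger than the greatest common divisor $g_0$, which is impossible. Equivalently, one can use the standard identity $\gcd(x/g,\,y/g)=\gcd(x,y)/g$ for $g=\gcd(x,y)$.

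\emph{The identity.} Substitute $W_1U_2=g_0\xi$ and $U_1V_2=g_0\eta$ into the definition~\eqref{eq:f1}:
\[
  f_1=(g_0\xi)^2+(g_0\eta)^2=g_0^2(\xi^2+\eta^2).
\]

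No step is a genuine obstacle. The only point needing care is the nonvanishing of the summands, which guarantees $g_0\neq 0$; this is immediate from the strict inequalities in the setup. Note that the Master condition $M=\square$ is not used at all, so the decomposition holds for every admissible tuple $(a,b,m,n)$.

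A remark could follow the proof to connect the result with Proposition~\ref{prop:p-adic}. Since $\gcd(\xi,\eta)=1$, every odd prime factor $\ell$ of $\xi^2+\eta^2$ divides neither $\xi$ nor $\eta$. Such an $\ell$ satisfies $\ell\equiv 1\pmod 4$, because $-1$ is a square modulo $\ell$. The remark would then observe that the blockers arise from the primes dividing $\xi^2+\eta^2$, up to interaction with $g_0$. This is only motivation and is not part of the proof.
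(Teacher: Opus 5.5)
Your proposal is correct and follows essentially the same route as the paper: you write $W_1U_2 = g_0\xi$ and $U_1V_2 = g_0\eta$, substitute into $f_1$, and get $\gcd(\xi,\eta)=1$ from the maximality of $g_0$. Your explicit check that both summands are positive, so that $g_0 \ge 1$ is well defined, is a small point the paper leaves implicit.
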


\begin{proof}
Since $f_1 = (W_1 U_2)^2 + (U_1 V_2)^2$ and $g_0 \mid W_1 U_2$,
$g_0 \mid U_1 V_2$, we have $g_0^2 \mid f_1$. Dividing through by
$g_0^2$ gives $f_1 / g_0^2 = \xi^2 + \eta^2$ with
$\xi, \eta\in\ZZ$, and $\gcd(\xi,\eta)=1$ by definition of $g_0$.
\end{proof}

The decomposition~\eqref{eq:canonical} is the cleanest reformulation
of the blocker condition for Master-Hits that we know.

\begin{conjecture}[Conjecture E1]\label{conj:E1}
For every Master-Hit $(a,b,m,n)\in\mathcal{M}$, the integer
$\xi^2 + \eta^2$ obtained from Lemma~\ref{lem:canonical} is not a
perfect square. Equivalently: $(\xi, \eta)$ are never the legs of a
Pythagorean triple.
\end{conjecture}

\begin{proposition}\label{prop:E1-implies-basis}
Conjecture~\ref{conj:E1} implies Conjecture~\ref{conj:basis} (and hence
Corollary~\ref{cor:noperfect}).
\end{proposition}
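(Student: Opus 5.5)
The implication will come from the canonical decomposition (Lemma~\ref{lem:canonical}) together with the equivalence $f_1=\square \iff \mathrm{Blk}(a,b,m,n)=\emptyset$ from Section~\ref{sec:setup}. I will argue by contraposition. Fix a Master-Hit $(a,b,m,n)\in\mathcal{M}$ and suppose Conjecture~\ref{conj:basis} fails for it, so that $\mathrm{Blk}(a,b,m,n)=\emptyset$. By that equivalence (Definition~\ref{def:blocker}: every prime divisor of $f_1$ has even exponent), this means $f_1=k^2$ for some positive integer $k$.

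The key step is to transfer squareness from $f_1$ to $\xi^2+\eta^2$. By \eqref{eq:canonical} we have $f_1=g_0^2(\xi^2+\eta^2)$ with $g_0\ge 1$; note $g_0\neq 0$ because $W_1U_2$ is odd, hence nonzero. Then
\[
  \xi^2+\eta^2=\left(\frac{k}{g_0}\right)^2
\]
is the square of a rational number. It is also a positive integer, since $\xi$ is a nonzero integer. A positive integer that is the square of a rational number is the square of an integer, by rational-root or unique-factorisation reasoning: $g_0^2\mid k^2$ forces $g_0\mid k$. Hence $\xi^2+\eta^2$ is a perfect square, contradicting Conjecture~\ref{conj:E1} for this tuple.

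One can say this directly with valuations instead: for every prime $\ell$, $v_\ell(\xi^2+\eta^2)=v_\ell(f_1)-2v_\ell(g_0)$. So $v_\ell(f_1)$ is odd exactly when $v_\ell(\xi^2+\eta^2)$ is odd. This gives the sharper statement $\mathrm{Blk}(a,b,m,n)=\{\ell : v_\ell(\xi^2+\eta^2)\ \text{odd}\}$, so E1 yields a blocker at once. The final clause follows by invoking Corollary~\ref{cor:noperfect}, whose hypothesis is exactly Conjecture~\ref{conj:basis}.

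There is no real obstacle here; the only point needing care is the passage from a rational square to an integer square, via $g_0\mid k$ or the valuation identity. I would also remark that the Master condition $M=\square$ plays no role, so the implication holds tuplewise for every admissible $(a,b,m,n)$. I would not need the coprimality $\gcd(\xi,\eta)=1$ for this direction.
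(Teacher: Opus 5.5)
Your proof is correct and is essentially the paper's argument: both rest on $f_1 = g_0^2(\xi^2+\eta^2)$ and the resulting identity $v_\ell(f_1) = 2v_\ell(g_0) + v_\ell(\xi^2+\eta^2)$, which makes the odd-exponent primes of $\xi^2+\eta^2$ exactly the blockers; your contrapositive rational-square phrasing is just a repackaging of this. You are also right that $\gcd(\xi,\eta)=1$ is not needed, since the paper uses it only for the side remark that such primes satisfy $p\equiv 1 \pmod 4$.
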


\begin{proof}
By Lemma~\ref{lem:canonical}, $f_1 = g_0^2 (\xi^2 + \eta^2)$, hence
for every prime $p$
\begin{equation}\label{eq:vp-decomposition}
  v_p(f_1) \;=\; 2\, v_p(g_0) \;+\; v_p(\xi^2 + \eta^2),
\end{equation}
so $v_p(f_1)$ has the same parity as $v_p(\xi^2 + \eta^2)$. If
$\xi^2 + \eta^2 \ne \square$, there is at least one prime $p$ for
which $v_p(\xi^2 + \eta^2)$ is odd. Since $\gcd(\xi, \eta) = 1$, every
such $p$ is necessarily $p \equiv 1 \pmod 4$ (sums of two coprime
squares cannot be divisible to an odd power by $p \equiv 3 \pmod 4$
or by $p = 2$), and by~\eqref{eq:vp-decomposition} also $v_p(f_1)$ is
odd. Hence $p$ is a blocker of the Master-Hit, and
$\mathrm{Blk}(a, b, m, n) \neq \emptyset$.
\end{proof}

The reduction is striking: the entire blocker phenomenon collapses to
the question whether the coprime pair $(\xi, \eta)$ of explicit
polynomial expressions in $a, b, m, n$ can ever be a Pythagorean
leg pair.

\begin{remark}[Connection to a family of twelve square-extracting formulas]\label{rem:twelve}
Empirical analysis identifies twelve algebraic expressions
$D(a,b,m,n)$ (involving $U_1, U_2, V_1, V_2, W_1$, the parameters
$a, b, m, n$, and a small modifier
$k\in\{1\}\cup\{p \le 10^2 : p\equiv 1 \bmod 4\}$) which satisfy
$D^2 \mid f_1$ on at least some Master-Hits. They are:
\begin{itemize}[leftmargin=2em,itemsep=2pt]
  \item from $(a, b)$:\ \ $b U_1 k$, $a U_1 k$, $U_1 k$, $b k$, $a k$;
  \item from $(m, n)$:\ \ $n U_2 k$, $m U_2 k$, $U_2 k$, $n k$, $m k$;
  \item from $\Zi$:\ \ $\gcd(am+bn, an+bm)\cdot k$, $\gcd(am-bn, an-bm)\cdot k$.
\end{itemize}
On the $17{,}204$ classically tabulated Master-Hits, at least one of
these twelve formulas yields a quotient $f_1/D^2$ which exposes a
blocker. The choice of formula varies; for single-blocker hits
($\le 1$ blocker), $D = n$ or $D = m$ suffices, with $k = 1$
in $268/272$ cases and $k\in\{29, 89\}$ in the four exceptional
cases listed in Appendix~\ref{app:exceptional-bricks} ($k = 29$
in three cases, $k = 89$ in one). For multi-blocker
hits the formula $D = g_0$ from Lemma~\ref{lem:canonical}
\emph{always} applies, and is the universal one.
\end{remark}

\begin{remark}\label{rem:E1-empirical}
Conjecture~\ref{conj:E1} has been verified on the same dataset
$\mathcal{D}_{\mathrm{fact}}$ of $151{,}575$ fully factored
Master-Hits used in Verification~\ref{thm:finite-verification}: in every
case $\xi^2 + \eta^2$ has at least one prime divisor of odd exponent. A
counterexample to Conjecture~\ref{conj:E1} would directly give a
perfect cuboid.
\end{remark}

\subsubsection*{Computational determination of the exponent-one blockers}
\label{ssec:blocker-computation}

For Master-Hits in which every blocker has $f_1$-valuation exactly~$1$
(by far the dominant case in our database; cf.\ \S\ref{ssec:empirical}),
Lemma~\ref{lem:canonical} reduces blocker identification to an
elementary integer factorisation in a much smaller number than $f_1$
itself. Let $\mathrm{rf} = \prod_{\ell\in\mathrm{Blk}(a,b,m,n)}\ell$
denote the squarefree product of the blocker primes. If every blocker
has valuation~$1$ in $f_1$, then $f_1/\mathrm{rf}$ is a perfect
square, and Lemma~\ref{lem:canonical} gives a closed form for it.

\begin{proposition}[$g_0$ exposes the blockers]\label{prop:g0-blockers}
Let $(a,b,m,n)\in\mathcal{M}$ have only blockers of valuation~$1$, and
write $f_1 = \mathrm{rf}\cdot h^2$ with $h\in\NN$. Then there is an
integer $k\ge 1$ with
\begin{equation}\label{eq:g0-k}
  h \;=\; k \cdot g_0,\qquad g_0 = \gcd(W_1U_2,\, U_1 V_2),
\end{equation}
i.e.\ $\mathrm{rf} = (\xi^2+\eta^2)/k^2$ in the notation of
Lemma~\ref{lem:canonical}.
\end{proposition}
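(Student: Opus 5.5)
Write $f_1 = g_0^2 S$ with $S := \xi^2+\eta^2$, as in Lemma~\ref{lem:canonical}. The plan is to compare this decomposition with $f_1 = \mathrm{rf}\cdot h^2$ one prime at a time, using the valuation identity \eqref{eq:vp-decomposition}. The content of the statement is that $g_0 \mid h$; once that holds, $k := h/g_0$ is a positive integer, and dividing $f_1 = \mathrm{rf}\,k^2 g_0^2 = g_0^2 S$ by $g_0^2$ gives $\mathrm{rf} = S/k^2$ immediately.

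First step. Because every blocker has valuation exactly~$1$, every prime $p$ satisfies $v_p(f_1) = \varepsilon_p + 2v_p(h)$, where $\varepsilon_p = 1$ if $p\in\mathrm{Blk}(a,b,m,n)$ and $\varepsilon_p = 0$ otherwise. This is just the definition of $\mathrm{rf}$ as the squarefree product of the odd-exponent primes. Equivalently, $\mathrm{rf}$ is the squarefree kernel of $f_1$, so $h^2 = f_1/\mathrm{rf}$ is the largest square $d^2$ such that $f_1/d^2$ is squarefree.

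Second step: show $v_p(g_0)\le v_p(h)$ for every $p$. From \eqref{eq:vp-decomposition} and the first step,
\[
2v_p(g_0) + v_p(S) = \varepsilon_p + 2v_p(h).
\]
Since $v_p(S)\ge 0$, this gives $2v_p(g_0)\le \varepsilon_p + 2v_p(h)\le 1+2v_p(h)$. Comparing parities of the integers involved then forces $v_p(g_0)\le v_p(h)$. Hence $g_0\mid h$, and $k\ge 1$.

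The final identity $\mathrm{rf} = S/k^2$ also shows $v_p(S) = \varepsilon_p + 2v_p(k)$. This is consistent with the proof of Proposition~\ref{prop:E1-implies-basis}: all the parity information of $f_1$ sits in $S$.

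I expect no real obstacle: the argument is valuation bookkeeping. The one point needing care is making explicit that the hypothesis ``only blockers of valuation~$1$'' is exactly what makes $\mathrm{rf}$ squarefree and $h^2$ the maximal square cofactor. Without that hypothesis, $\mathrm{rf}$ as defined would not satisfy $f_1 = \mathrm{rf}\,h^2$ with $h$ an integer.

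I would also remark that $k$ need not equal~$1$, because $S$ can itself contain square factors. This matches the modifiers $k\in\{29,89\}$ observed in Remark~\ref{rem:twelve}.
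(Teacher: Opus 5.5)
Your argument is correct and is essentially the paper's proof done one prime at a time. The paper reads $h^2/g_0^2=(\xi^2+\eta^2)/\mathrm{rf}$ as a rational square with squarefree denominator, gets $\mathrm{rf}\mid \xi^2+\eta^2$, and then $g_0\mid h$; you get $g_0\mid h$ directly from $2v_p(g_0)\le \varepsilon_p+2v_p(h)$ and parity.

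Some of your side remarks are wrong, though none affects the proof. Blockers are by definition the primes of odd valuation, so $f_1/\mathrm{rf}$ is a perfect square for every Master-Hit. Hence the valuation-one hypothesis is not needed for $h\in\NN$ or anywhere in your argument, and $\mathrm{rf}$ is the squarefree part of $f_1$, not its kernel. Also, the modifiers $k\in\{29,89\}$ of Remark~\ref{rem:twelve} belong to the $D=nk$, $D=mk$ formulas, not to $k=h/g_0$; the distribution of the latter is the one in Remark~\ref{rem:k-distribution}.
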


\begin{proof}
By Lemma~\ref{lem:canonical} we have $f_1 = g_0^2(\xi^2+\eta^2)$
with $\gcd(\xi,\eta)=1$. Comparing $f_1 = \mathrm{rf}\cdot h^2$ with
$g_0^2(\xi^2+\eta^2)$ gives
$h^2/g_0^2 = (\xi^2+\eta^2)/\mathrm{rf}$. Since the left side is a
rational square and $\mathrm{rf}$ is squarefree, the only way the
right side can have squarefree denominator is for $\mathrm{rf}$ to
divide $\xi^2 + \eta^2$. Hence $g_0 \mid h$ and the quotient
$k := h/g_0$ satisfies $\xi^2 + \eta^2 = k^2 \cdot \mathrm{rf}$.
\end{proof}

\begin{remark}[Empirical $k$-distribution]\label{rem:k-distribution}
The integer $k$ in~\eqref{eq:g0-k} measures how far the
canonical pair $(\xi,\eta)$ is from being a primitive
sum-of-two-squares representation of $\mathrm{rf}$. We have computed
$k$ for each of the
$136{,}674$ Master-Hits in $\mathcal{D}_{\mathrm{fact}}$ whose
blockers all have valuation~$1$. The $k$-values are heavily
concentrated near~$1$:

\begin{center}
\begin{tabular}{l r r}
\toprule
$k$ & records & cumulative\\
\midrule
$1$         & $99{,}125$ & $72.5\%$\\
$5$         & $16{,}823$ & $84.8\%$\\
$13$        & $5{,}315$  & $88.7\%$\\
$17$        & $2{,}943$  & $90.9\%$\\
$25$        & $1{,}919$  & $92.3\%$\\
$29$        & $2{,}025$  & $93.8\%$\\
$37, 65, 41, 85, \ldots$ & rest & $100\%$\\
\bottomrule
\end{tabular}
\end{center}

\noindent
The $k=1$ row is the case $\mathrm{rf} = \xi^2+\eta^2$ directly: the
canonical pair gives a primitive sum-of-two-squares representation of
the blocker product. The remaining $\approx 27\%$ correspond to
$k>1$, and the prime divisors of $k$ are confined to splitting primes
$\equiv 1\bmod 4$ (with rare exceptions discussed in
Remark~\ref{rem:twelve}). The maximum $k$ observed is $1{,}221{,}629$.
\end{remark}

\begin{remark}[Two equivalent universal formulas]
The proof of Proposition~\ref{prop:g0-blockers} shows that $g_0$
divides $s$ in every Master-Hit, and the same argument applies
verbatim to either of the $\Zi$-gcd expressions
$\gcd(am+bn,\, an+bm)$ or $\gcd(am-bn,\, an-bm)$ from
Remark~\ref{rem:twelve}: each of them, when squared, divides $f_1$ in
every Master-Hit, and yields its own valid form of~\eqref{eq:g0-k}.
The $g_0$ form is the most compact in the empirical sense
($k=1$ in $72.5\%$ of records vs.\ $\sim\!18\%$ for either
$\Zi$-gcd), so we adopt it as the canonical choice.
\end{remark}

\begin{proposition}[Disjointness of the $\Zi$-gcds]\label{prop:gcd-disjoint}
For every Master-Hit $(a,b,m,n)\in\mathcal{M}$, the prime supports
of the two integers $\gcd(am+bn,\,an+bm)$ and
$\gcd(am-bn,\,an-bm)$ are disjoint. In particular, no prime
$\ell$ with $v_\ell(f_1) = 2$ can divide both.
\end{proposition}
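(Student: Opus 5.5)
The plan is to prove the disjointness by elementary divisibility, using only the coprimality and parity conditions of Definition~\ref{def:master}; the Master condition $M=\square$ should not be needed at all. Write $d_+ := \gcd(am+bn,\,an+bm)$ and $d_- := \gcd(am-bn,\,an-bm)$. I will suppose that some prime $p$ divides both $d_+$ and $d_-$ and derive a contradiction.

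\emph{Step 1: both gcds are odd.} I will exclude $p=2$ first by looking at sums rather than doing a case analysis. We have
\[
(am+bn)+(an+bm)=(a+b)(m+n),
\]
and $a+b$ and $m+n$ are both odd because $a-b$ and $m-n$ are odd. Hence $am+bn$ and $an+bm$ have opposite parity, so $d_+$ is odd. Likewise
\[
(am-bn)+(an-bm)=(a-b)(m+n)
\]
is odd, so $d_-$ is odd. Therefore any common prime $p$ is odd.

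\emph{Step 2: $p$ divides $am$, $bn$, $an$, $bm$.} Adding and subtracting $am+bn$ and $am-bn$ gives $p\mid 2am$ and $p\mid 2bn$. Doing the same with $an+bm$ and $an-bm$ gives $p\mid 2an$ and $p\mid 2bm$. Since $p$ is odd, $p$ divides each of $am$, $bn$, $an$, $bm$.

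\emph{Step 3: contradiction with coprimality.} Because $\gcd(a,b)=1$, $p$ fails to divide at least one of $a$ and $b$.
\begin{itemize}
\item If $p\nmid a$, then $p\mid am$ and $p\mid an$ force $p\mid m$ and $p\mid n$, contradicting $\gcd(m,n)=1$.
\item If $p\nmid b$, then $p\mid bn$ and $p\mid bm$ give the same contradiction.
\end{itemize}
Hence $d_+$ and $d_-$ have no common prime factor.

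\emph{The ``in particular''.} This is immediate: no prime at all divides both $d_+$ and $d_-$, whatever its valuation in $f_1$.

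I do not expect a genuine obstacle. The only point needing care is Step~1, namely excluding $p=2$. This is exactly where the parity hypotheses ($a-b$ and $m-n$ odd) enter, and the sum identities above handle it without any case split.
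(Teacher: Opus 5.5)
Your proof is correct and follows the paper's argument: rule out $p=2$ by parity, use $A\pm C$ and $B\pm D$ to get $p\mid am,an,bm,bn$, and contradict $\gcd(a,b)=\gcd(m,n)=1$. The only difference is that you get oddness of both gcds from the identities $(am+bn)+(an+bm)=(a+b)(m+n)$ and $(am-bn)+(an-bm)=(a-b)(m+n)$, rather than the paper's count of which of $am,an,bm,bn$ is odd; this is a tidier route to the same conclusion and also covers the degenerate case where one entry is $0$.
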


\begin{proof}
Set $A := am + bn$, $B := an + bm$, $C := am - bn$, $D := an - bm$.
Suppose an odd prime $p$ divides both gcds, so $p$ divides each
of $A, B, C, D$. Then
\begin{align*}
  A + C &= 2am, & A - C &= 2bn, \\
  B + D &= 2an, & B - D &= 2bm.
\end{align*}
Since $p$ is odd, this yields $p \mid am$, $p \mid an$, $p \mid bn$,
and $p \mid bm$. From $p \mid am$ and $p \mid an$ together with
$\gcd(m, n) = 1$ we deduce $p \mid a$; from $p \mid bm$ and
$p \mid bn$ likewise $p \mid b$. Hence $p \mid \gcd(a, b) = 1$,
a contradiction.

The case $p = 2$ is excluded because both gcds are odd: the
master-tuple conditions $\gcd(a, b) = 1$, $a - b$ odd,
$\gcd(m, n) = 1$, $m - n$ odd force exactly one of $a, b$ to be
odd and exactly one of $m, n$ to be odd. Hence exactly one of the
four products $am, an, bm, bn$ is odd (the one coupling the odd
elements of each pair), making exactly one of $A, B$ odd and
exactly one of $|C|, |D|$ odd; both gcds are therefore odd. The
final claim about $v_\ell(f_1) = 2$ is then immediate.
\end{proof}

\begin{remark}[Empirical confirmation]\label{rem:gcd-disjoint-empirical}
Across the full $\mathcal{D}_{\mathrm{fact}}$, we observe
$56{,}858$ exponent-two prime occurrences in the factorisation of
$f_1$. No prime divides both $\gcd(am+bn,\,an+bm)$ and
$\gcd(am-bn,\,an-bm)$ simultaneously, in agreement with
Proposition~\ref{prop:gcd-disjoint}. Of these exponent-two
prime occurrences, $27.1\%$ divide one of the two $\Zi$-gcds,
and $55.2\%$ divide $g_0 = \gcd(W_1U_2, U_1V_2)$; the remaining
$\approx 45\%$ correspond to symmetric Gaussian primes that
contribute square factors to $f_1$ but are not visible in any of
the three gcd expressions and would require an independent
argument.
\end{remark}

\subsection{Proof strategy for Conjecture~\ref{conj:blocker}}\label{ssec:strategy}

Conjecture~\ref{conj:blocker} combines an \emph{existence} statement
(a blocker exists, Conjecture~\ref{conj:basis}) with two further
requirements: that some blocker lie outside $\mathcal{P}$, and that
its $f_1$-valuation be exactly~$1$. We outline an approach for each.

\paragraph{Existence of a blocker.}
By Lemma~\ref{lem:canonical} and Proposition~\ref{prop:E1-implies-basis},
the existence of a blocker is equivalent to Conjecture~\ref{conj:E1}:
the coprime pair
\[
  \xi = \frac{W_1 U_2}{\gcd(W_1 U_2, U_1 V_2)},
  \qquad
  \eta = \frac{U_1 V_2}{\gcd(W_1 U_2, U_1 V_2)}
\]
is never a Pythagorean leg pair. The natural approach is to combine
the master condition $M = \square$ (which couples $a, b, m, n$ via the
quartic $H_{m,n}$) with the hypothetical pythagoreity of
$(\xi, \eta)$ and derive a contradiction with the coprimality
conditions $\gcd(a, b) = \gcd(m, n) = 1$. Concretely,
$\xi^2 + \eta^2 = w^2$ together
with $M = \square$ would yield two simultaneous Gaussian-square
identities for $\zeta := W_1 U_2 + i\,U_1 V_2$ and
$z' := V_1 U_2 + i\,U_1 V_2$, and we expect a height/discriminant
estimate on $E_{m,n}$ to rule them out. The argument is delicate
because the discriminant of $E_{m,n}$ is explicit but not
factor-controllable for general $(m, n)$; this is where the proof
currently stalls.

\paragraph{Existence of an exponent-one blocker outside $\mathcal{P}$.}
Granted that some blocker exists, Conjecture~\ref{conj:blocker} adds two
requirements that must hold \emph{simultaneously} for a single witness
prime $\ell$: namely (i) $v_\ell(f_1) = 1$ and (ii) $\ell$ is coprime
to every element of $\mathcal{P}$.
The $29$ canonical parameters span the algebra of polynomial
expressions of moderate degree in $a, b, m, n$ from the Pythagorean
parametrisation. A blocker that divides one of them is, in our
empirical taxonomy, a \emph{coincidental} blocker---the result of an
arithmetic congruence between the parameters, not a feature of the
curve. The empirical content of Conjecture~\ref{conj:blocker} is that
there is always at least one \emph{generic} exponent-one blocker,
i.e.\ a Gaussian prime $\pi\bar\pi = \ell$ over a split prime
$\ell\equiv 1\pmod 4$ with $v_\pi(\zeta) = 1$, $v_{\bar\pi}(\zeta) = 0$,
and $\ell$ coprime to $\mathcal{P}$. We do not claim, and our data
does not support (cf.\ Remark~\ref{rem:largest-fails}), that this
generic blocker is the largest prime divisor of $f_1$.

To prove this, one approach is to bound the size of the
\emph{coincidental} part of $f_1$ (the part supported on
$\bigcup_{p\in\mathcal{P}}\mathrm{primes}(p)$) and the
\emph{higher-power} part (primes $\ell$ with $v_\ell(f_1)\ge 3$) in
terms of the heights of the parameters, and to show that $f_1$ is
generically not exhausted by these two contributions. We have not
carried out this estimate; it is the natural sequel to the present
paper.

\subsection{Summary of the empirical evidence}\label{ssec:empirical}

Table~\ref{tab:verification} summarises the empirical evidence
relevant to Conjecture~\ref{conj:blocker} and its variants. The
exponent-one blocker phenomenon
(Verification~\ref{thm:finite-verification}) is verified without
exception on the fully factored portion of the database; on the
partially factored portion, only the basis form is currently
decidable.

\begin{table}[ht]
\caption{Empirical evidence for Conjecture~\ref{conj:blocker} and
its variants.}
\label{tab:verification}
\begin{center}
\footnotesize
\begin{tabular}{l r l r}
\toprule
Dataset / variant & records & verification type & ctr-ex.\\
\midrule
$\mathcal{D}_{\mathrm{fact}}$: Conj.~\ref{conj:blocker}      & $151{,}575$ & complete factorisation of $f_1$ & $0$\\
$\mathcal{D}_{\mathrm{part}}$: basis form (Conj.~\ref{conj:basis})  & $506$ & odd-exp.\ prime from trial division & $0$\\
Largest outside-$\mathcal{P}$ prime is a blocker            & $151{,}575$ & complete factorisation of $f_1$ & $3$\\
Smallest outside-$\mathcal{P}$ blocker has $v_\ell(f_1) = 1$ & $151{,}575$ & complete factorisation of $f_1$ & $242$\\
$\mathcal{D}_{\mathrm{all}}$: $x^2+y^2+z^2 \ne \square$      & $1{,}284{,}670$ & integer $\sqrt{\cdot}$ check & $0$\\
\bottomrule
\end{tabular}
\normalsize
\end{center}
\end{table}

\noindent
$\mathcal{D}_{\mathrm{part}}$ denotes those Master-Hits not in
$\mathcal{D}_{\mathrm{fact}}$ for which trial division has already
extracted at least one prime factor of $f_1$ with odd exponent;
on every such record, Conjecture~\ref{conj:basis} (basis form) is
verified directly without requiring full factorisation.
$\mathcal{D}_{\mathrm{part}}$ and $\mathcal{D}_{\mathrm{fact}}$ are
therefore disjoint, with $|\mathcal{D}_{\mathrm{part}} \cup
\mathcal{D}_{\mathrm{fact}}| = 152{,}081$.

\medskip\noindent\textbf{Robustness.} The implication chain
Conjecture~\ref{conj:blocker} $\Rightarrow$ Conjecture~\ref{conj:basis}
$\Rightarrow$ Corollary~\ref{cor:noperfect} is logical, so any future
counterexample to Conjecture~\ref{conj:blocker} would still leave the
basis-form implication for non-existence of perfect cuboids
\emph{within the Master-Hit class} intact, provided
Conjecture~\ref{conj:basis} (equivalently, Conjecture~\ref{conj:E1})
remains valid.

\section{The Mordell--Weil generator}\label{sec:mw}

We describe an algorithm that takes a coprime pair $(m,n)$ of
positive integers with $m > n > 0$ and $m-n$ odd, together with a
positive scalar bound $K$, and produces a finite list of Master-Hits
with that $(m,n)$. The algorithm performs three steps. We label it
$\mathcal{A}_{\mathrm{MW}}$ and refer to it as the
\emph{Mordell--Weil generator}.

\begin{enumerate}[label=\textbf{Step \arabic*.},leftmargin=2.5em,itemsep=4pt]
\item \emph{Quartic to Weierstrass.} Compute $U_2 = m^2 - n^2$ and
$V_2 = 2mn$. Form the quartic $H_{m,n}$ of~\eqref{eq:quartic}.
Apply PARI's \texttt{ellfromeqn} to the polynomial
$A t^4 + B t^2 + C - s^2$ to obtain Weierstrass coefficients
$a_2, a_4, a_6$ and the rational map $\phi_{m,n}$ of
Section~\ref{ssec:birational}. The resulting curve is denoted
$E_{m,n}$.

\item \emph{Mordell--Weil generators.} Attempt to compute generators
$P_1, \ldots, P_r$ of $E_{m,n}(\QQ)$ modulo torsion using Sage's
\texttt{E.gens()} with second descent at limit $15$. If
\texttt{E.gens()} fails to terminate within a fixed time, fall back
to a \emph{database lift}: for every Master-Hit $(a, b)$ over
$(m, n)$ already in the database, lift $(a, b)$ to a point of
$E_{m,n}(\QQ)$ via~\eqref{eq:fwd}, and use the resulting (possibly
non-basis) collection of points as $P_1, \ldots, P_r$.

\item \emph{Scalar enumeration and certification.} Let
$T_1, \ldots, T_t$ enumerate the rational torsion of $E_{m,n}$. For
each $(c_1, \ldots, c_r)\in\{-K, \ldots, K\}^r\setminus\{0\}$ and
each $T_j$, form $R := c_1 P_1 + \cdots + c_r P_r + T_j$ in
$E_{m,n}(\QQ)$. Compute $\tau(R)$ via~\eqref{eq:inv}; if undefined
(i.e.\ $R\in\{O\}\cup\mathcal{T}_\tau$) or non-square in $\QQ_{\ge 0}$
or zero, discard $R$. Otherwise extract the non-negative
square root $t = a/b\in\QQ_{\ge 0}$, reduce to lowest terms, and check the
positivity, parity, and coprimality conditions of
Definition~\ref{def:master} together with
$M(a, b, m, n) = \square$. Output every $(a, b, m, n)$ which
passes all checks.
\end{enumerate}

\begin{theorem}[Correctness of $\mathcal{A}_{\mathrm{MW}}$]\label{thm:mw-correctness}
Every tuple output by $\mathcal{A}_{\mathrm{MW}}$ is a Master-Hit.
\end{theorem}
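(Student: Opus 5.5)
The plan is to prove Theorem~\ref{thm:mw-correctness} by reading off the final certification in Step~3 of $\mathcal{A}_{\mathrm{MW}}$, rather than relying on any property of the Weierstrass model or of $\tau$. The key observation is that a tuple is emitted only after every defining condition of Definition~\ref{def:master} has been checked directly in exact integer arithmetic. So correctness does not depend on \texttt{ellfromeqn}, on the formula~\eqref{eq:inv}, on whether $P_1,\dots,P_r$ form a basis (the database-lift fallback of Step~2), or on rank or torsion claims. Those ingredients only affect \emph{which} candidates get tested, that is, completeness, not soundness.

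Concretely, fix an output tuple $(a,b,m,n)$ and argue in this order. First, $(m,n)$ is an input satisfying $m>n>0$, $\gcd(m,n)=1$ and $m-n$ odd, by the standing hypotheses on the generator's input. Second, $t=a/b$ was obtained as a nonnegative rational square root of $\tau(R)$, reduced to lowest terms. Zero values are discarded, so $a,b>0$ and $\gcd(a,b)=1$. Third, Step~3 explicitly checks $a>b$ and $a-b$ odd. Fourth, Step~3 checks $M(a,b,m,n)=\square$ by an exact integer square test, in the same style as the \texttt{isqrt} test of Verification~\ref{thm:emp-non-exist}. Together these are precisely the conditions of Definition~\ref{def:master}, so the tuple lies in $\mathcal{M}$.

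As a consistency remark rather than a logical necessity, I would add that the final $M=\square$ check is in fact automatic. If $\tau(R)=t^2$ with $t\in\QQ^\times$, then Lemma~\ref{lem:correspondence} gives a rational point $(t,s)$ on $H_{m,n}$. Clearing denominators in~\eqref{eq:quartic} then shows $M(a,b,m,n)=(sb^2)^2$ with $sb^2\in\ZZ$, since $sb^2$ is rational and its square is an integer. This explains why the test never rejects a genuine lift, and it also shows the algorithm loses nothing by including it.

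The main obstacle is not mathematical but one of precise specification. One must make sure the theorem refers to the algorithm exactly as stated, with certification performed in exact arithmetic and not floating point. One must also note that the rational computations in $E_{m,n}(\QQ)$ (group law, evaluation of $\tau$, square-root extraction) are exact. Then even an erroneous intermediate point cannot produce a non-Master-Hit, because the last filter is independent of all intermediate steps. I would state this independence explicitly as the crux of the proof.
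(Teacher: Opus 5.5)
Your proposal is correct and follows the paper's proof. In both, soundness comes entirely from the fact that Step~3 certifies every clause of Definition~\ref{def:master}, including $M(a,b,m,n)=\square$, by exact integer arithmetic, so the Weierstrass model, $\tau$, the choice of generators and Lemma~\ref{lem:correspondence} play no logical role; the paper says the same explicitly, and your aside that a genuine lift automatically passes the $M=\square$ test (via $b^4 f(a/b)=M(a,b,m,n)$) is a correct optional addition not made in the paper.
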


\begin{proof}
The output is the set of $(a, b, m, n)\in\NN^4$ for which all the
checks of Step~3 pass. The check $M(a, b, m, n) = \square$ is
exactly the Master-Hit defining condition (Definition~\ref{def:master}),
and the parity, positivity, and coprimality conditions are the rest
of the definition. The lift via Lemma~\ref{lem:correspondence}
guarantees that the $(a, b)$ so produced indeed corresponds to a
rational point of $E_{m,n}$, but this is in fact unused in the
proof: every output is verified by exact integer arithmetic against
the Master-Hit defining condition.
\end{proof}

\subsection{Empirical output}\label{ssec:mw-empirical}

The Mordell--Weil generator was run against the database of
$61{,}829$ classical primitive Master-Hits (deduplicated under the
slot-swap involution $\sigma\colon(a,b,m,n)\mapsto(m,n,a,b)$, which
produces the same primitive brick), focusing first on the $43$ fibres
with at least $8$ database hits, then escalating to fibres with
$\ge 4$ hits and finally to a systematic scan over $(m,n)$ with
$m\le 1000$.

\paragraph{Provenance and deduplication conventions.}
For every record we store a single \emph{provenance group} in
$\texttt{pub.generator\_groups}$ recording the procedure that first
emitted that record; subsequent rediscoveries by other procedures
are \emph{not} re-tagged. The slot-swap involution $\sigma$ is applied
once before insertion: each new $(a, b, m, n)$ is rewritten to a fixed
$\sigma$-orbit representative (chosen by lexicographic comparison of
the two ordered tuples $(a, b, m, n)$ and $(m, n, a, b)$) and silently
discarded if the canonical form is already present. Thus the totals
in Table~\ref{tab:provenance} are disjoint by construction:
$61{,}829 + 1{,}222{,}841 = 1{,}284{,}670 = |\mathcal{D}_{\mathrm{all}}|$
counts each $\sigma$-orbit exactly once.

The Mordell--Weil generator finds Master-Hits of very large height:
the largest in our database has $\max(|a|, |b|)$ on the order of
$10^{888}$, producing edges with $1{,}783$ decimal digits ---
qualitatively beyond the reach of exhaustive search over
$(a, b, m, n)$. Yet none is a perfect cuboid: each generated
Master-Hit retains a non-empty blocker set on $f_1$, in keeping
with Conjecture~\ref{conj:blocker}. The output is summarised
below.

\begin{center}
\begin{tabular}{l r}
\toprule
Mordell--Weil-active fibres covered & $411$\\
\midrule
Hits with classical provenance & $61{,}829$\\
\quad (\texttt{Exhaustive-Search}, \texttt{Rathbun-Search}, \texttt{Saunderson-Generator}) & \\
Hits with Mordell--Weil provenance \texttt{MW-m-n} & $1{,}222{,}841$\\
Total Master-Hits in extended database & $1{,}284{,}670$\\
\midrule
Largest fibre: \texttt{MW-2368-1207} & $93{,}067$\\
Largest fibre: \texttt{MW-1863-638}  & $55{,}078$\\
Largest fibre: \texttt{MW-1136-99}   & $27{,}011$\\
Largest fibre: \texttt{MW-792-95}    & $24{,}416$\\
\midrule
Perfect cuboids in extended database & $0$\\
\bottomrule
\end{tabular}
\end{center}

\section{Empirical database and family classification}\label{sec:empirics}

We classify our $1{,}284{,}670$ Master-Hits against the three classical
infinite families:

\begin{itemize}[leftmargin=2em,itemsep=2pt]
  \item \textbf{Saunderson}~\cite{saunderson} (1740): Master-Hits
  with edges $u(3v^2-u^2)$, $v(3u^2-v^2)$, $4uvw$ where $(u,v,w)$
  is a Pythagorean triple; see also Guy~\cite{guy}.
  \item \textbf{Lenhart-attributed family}: Master-Hits with edges
  $(u^2-w^2)(v^2-w^2)$, $4uvw^2$, $2uw(v^2-w^2)$ where $u^2+v^2=5w^2$
  in $\ZZ[\varphi]$, $\varphi=(1+\sqrt 5)/2$. The conventional
  attribution to William Lenhart~\cite{hogan-lenhart} is taken from the
  online compendium of Piezas~\cite{piezas-identities}; see footnote
  in Section~\ref{sec:intro}.
  \item \textbf{Himane} (2024,~\cite{himane}): three theorems
  parametrising body cuboids by pairs of Pythagorean triples with a
  coupling condition.
\end{itemize}

Each Master-Hit in our database carries two orthogonal labels.
The \emph{provenance} records how the hit was discovered (which
search procedure or generator produced it). The \emph{family tag}
records whether the resulting primitive brick matches one of the
bounded closed-form normal forms used in our classification
(Saunderson, Lenhart, Himane T1/T2/T3). Provenance is therefore
about discovery, not structure; family membership is about
structure, not discovery, and we may have a brick produced by the
Saunderson generator whose primitive form falls outside the strict
Saunderson normal form (and thus does not receive the Saunderson
tag). The family classification proceeds by enumerating each
family up to a generator bound (Saunderson $g\le 500$, Lenhart
$w\le 300$, Himane $g\le 80, k\le 15$) and matching the resulting
primitive bricks against the database; a hit may carry several
family tags. Table~\ref{tab:provenance} breaks the database down
by provenance, with the number of family-tagged records in each
provenance shown in the last column.

\begin{table}[ht]
\caption{Master-Hits in the extended database, by provenance. Each
record has exactly one provenance; the last column counts how many
records additionally match a classical closed-form family
(Saunderson, Lenhart, Himane T1/T2/T3).}
\label{tab:provenance}
\begin{center}
\begin{tabular}{l r r}
\toprule
Provenance & records & of which classical-family\\
\midrule
Mordell--Weil generator (Section~\ref{sec:mw}) & $1{,}222{,}841$ & $51$\\
Rathbun catalogue~\cite{rathbun}              & $49{,}953$       & $369$\\
Saunderson generator (1740)~\cite{saunderson}        & $6{,}413$        & $168$\\
Exhaustive search ($\max(a,b,m,n) \le 2{,}300$) & $5{,}463$        & $234$\\
\midrule
Total                                          & $1{,}284{,}670$ & $822$\\
\bottomrule
\end{tabular}
\end{center}
\end{table}

\begin{table}[ht]
\caption{Records matching a classical closed-form family. The first
column gives the number of distinct primitive bricks produced by the
parametric generator (only meaningful for families with an explicit
closed-form generator we ran; Himane is matched against pre-tabulated
primitive bricks). The records column counts records carrying that
family tag (a record may carry several family tags).}
\label{tab:families}
\begin{center}
\begin{tabular}{l r r}
\toprule
Family & primitive bricks generated & records tagged\\
\midrule
Saunderson & $15{,}704$ & $513$\\
Lenhart    & $66$       & $66$\\
Himane T1  & ---        & $165$\\
Himane T2  & ---        & $107$\\
Himane T3  & ---        & $107$\\
\midrule
Any family tag      & ---        & $822$\\
Multiple family tags & ---       & $74$\\
\bottomrule
\end{tabular}
\end{center}
\end{table}

\noindent
Two remarks on the tables. First, the family classification is
\emph{orthogonal} to the provenance classification, and the
Saunderson row of Table~\ref{tab:provenance} ($168$) and the
Saunderson row of Table~\ref{tab:families} ($513$) measure
different things. Of the $6{,}413$ Master-Hits emitted by the
Saunderson generator, only $149$ pass the strict primitive-form
match for the Saunderson family tag (the remaining $19$ of the
$168$ generator-side family matches in
Table~\ref{tab:provenance} carry a Himane tag). Conversely, of
the $513$ Saunderson-tagged records in Table~\ref{tab:families},
only $149$ stem from the Saunderson generator itself; the
remaining $364$ enter the database via Rathbun's
catalogue~\cite{rathbun} ($354$) or our exhaustive
search ($10$). The fact that several hundred
algebraically-Saunderson bricks are first contributed by external
sources --- not by our own bound-$g \le 500$ Saunderson
generator --- is a meaningful indicator that the algebraic
Saunderson family extends well beyond the parameter range our
closed-form scan covered. The Mordell--Weil generator, on the
other hand, contributes $0$ Saunderson-tagged records out of
$1{,}222{,}841$, confirming that MW-generated records sit
overwhelmingly outside the classical closed-form families. Second, the family-tag totals in
Table~\ref{tab:families} count tags rather than records: the row
counts $513 + 66 + 165 + 107 + 107 = 958$ exceed the
\textsc{Any}-row count $822$ by $136$, which is the number of extra
tags carried by the $74$ multi-tagged records (each contributes
once to two or more rows).

The $61{,}829$ pre-Mordell--Weil records (Rathbun, Saunderson
generator, Exhaustive search combined) lie on $54{,}199$ distinct
$(m, n)$-fibres. Of these, only $407$ have been subsequently
processed by the Mordell--Weil generator within the $m \le 1000$
scan range; the remaining $53{,}792$ fibres are MW-untouched, and
the $58{,}865$ pre-MW records on those fibres are plausible
MW-targets in a future extended scan.

The Mordell--Weil generator therefore accounts for $95.2\%$ of
the records in our database, with all but $51$ of them outside
the closed-form classical families: it produces hundreds of body
cuboids on each of hundreds of fibres not reached by the
classical closed-form searches used in our classification.

\subsection{The semi-scaled \texorpdfstring{$(g_+, g_-)$}{(g+,g-)}-fibration and single-blocker hits}\label{ssec:gpm-singleblocker}

The family tags of Table~\ref{tab:families} are tied to closed-form
generators with finite parameter bounds, so a Master-Hit produced by
the same algebraic mechanism as Saunderson but outside the
generator's reach receives no Saunderson family tag. A coarser invariant
that ignores the bound captures the algebraic mechanism directly:
for $(a,b,m,n)\in\mathcal M$ define
\[
  g_+ \;:=\; \gcd(am+bn,\;an+bm),
  \qquad
  g_- \;:=\; \gcd\bigl(|am-bn|,\;|an-bm|\bigr).
\]

\begin{lemma}[Structural identity for $(g_+, g_-)$]\label{lem:gpm-identity}
For every Master-Hit $(a, b, m, n)$:
\[
  g_+ \cdot g_- \;=\; \gcd(a^2-b^2,\;m^2-n^2),
  \qquad
  \gcd(g_+,\,g_-) \;=\; 1,
  \qquad
  g_\pm^2 \;\bigm|\; f_1.
\]
\end{lemma}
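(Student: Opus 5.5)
The plan is to argue prime by prime, using the Gaussian identities behind $g_\pm$. Set
\[
A := am+bn,\quad B := an+bm,\quad C := am-bn,\quad D := an-bm.
\]
Direct expansion gives
\[
A+B=(a+b)(m+n),\qquad A-B=(a-b)(m-n),
\]
\[
C+D=(a-b)(m+n),\qquad C-D=(a+b)(m-n).
\]
Two further identities, $A^2-B^2=C^2-D^2=U_1U_2$, are immediate corollaries.

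\emph{Parity.} By the parity argument in the proof of Proposition~\ref{prop:gcd-disjoint}, $g_+$ and $g_-$ are odd. Also $U_1,U_2$ are odd. So only odd primes $p$ need to be considered.

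\emph{Local valuations.} For odd $p$ and any integers $A,B$ we have
\[
\min(v_p(A),v_p(B))=\min(v_p(A+B),v_p(A-B)),
\]
since the pair $(A+B,A-B)$ is obtained from $(A,B)$ by a transformation invertible over $\ZZ_{(p)}$. Hence
\[
v_p(g_+)=\min\bigl(v_p((a+b)(m+n)),\,v_p((a-b)(m-n))\bigr),
\]
\[
v_p(g_-)=\min\bigl(v_p((a-b)(m+n)),\,v_p((a+b)(m-n))\bigr).
\]
Because $\gcd(a,b)=1$ and $p$ is odd, $p$ cannot divide both $a+b$ and $a-b$. Likewise $p$ cannot divide both $m+n$ and $m-n$.

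There are five cases.
\begin{itemize}
\item $p\nmid U_1$ or $p\nmid U_2$: then each of the four products above has a factor prime to $p$ in at least one slot. Both minima vanish, as does $v_p(\gcd(U_1,U_2))$.
\item $p\mid a-b$ and $p\mid m+n$: then $v_p(g_+)=\min(v_p(a-b),v_p(m+n))=\min(v_p U_1,v_p U_2)$. Meanwhile $v_p(C-D)=0$, so $v_p(g_-)=0$.
\item $p\mid a+b$ and $p\mid m-n$: symmetric to the previous case, giving the same conclusion.
\item $p\mid a-b$ and $p\mid m-n$: the roles of $g_+$ and $g_-$ swap.
\item $p\mid a+b$ and $p\mid m+n$: likewise, the roles of $g_+$ and $g_-$ swap.
\end{itemize}

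In every case $v_p(g_+)+v_p(g_-)=v_p(\gcd(U_1,U_2))$, and at most one of the two valuations is nonzero. This yields both the product formula and $\gcd(g_+,g_-)=1$. The coprimality is also already Proposition~\ref{prop:gcd-disjoint}.

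\emph{Divisibility $g_\pm^2\mid f_1$.} Let $p^e\,\|\,g_+$ with $e\ge1$. By the above, $e\le v_p(U_1)$ and $e\le v_p(U_2)$. Then $p^e\mid W_1U_2$ and $p^e\mid U_1V_2$, so $p^{2e}\mid (W_1U_2)^2+(U_1V_2)^2=f_1$. The same argument applies to $g_-$. Taking the product over primes gives $g_\pm^2\mid f_1$. Equivalently, $g_+g_-\mid g_0=\gcd(W_1U_2,U_1V_2)$ from Lemma~\ref{lem:canonical}, so in fact $(g_+g_-)^2\mid f_1$.

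\emph{Main obstacle.} The step needing the most care is the exact-valuation bookkeeping in the case analysis. The key tool is the invariance of the minimum under $(A,B)\mapsto(A\pm B)$ for odd $p$, combined with the fact that $p$ divides at most one of $a\pm b$ and at most one of $m\pm n$. This is what upgrades the easy divisibility $g_+g_-\mid\gcd(U_1,U_2)$ to equality. Note that the Master condition $M=\square$ is never used: the identity holds for all admissible Euclid pairs.
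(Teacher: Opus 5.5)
Your proof is correct, but it reaches the lemma by a different mechanism from the paper's.

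The paper splits the work into three separate arguments:
\begin{enumerate}
\item Integer linear combinations such as $mA-nB=aU_2$ and $aA-bB=mU_1$ give the global divisibility $g_\pm\mid\gcd(U_1,U_2)$, and hence $g_\pm^2\mid f_1$ at once.
\item Coprimality is re-proved from $A+C=2am$, $A-C=2bn$, $B+D=2an$, $B-D=2bm$.
\item Only the reverse divisibility $\gcd(U_1,U_2)\mid g_+g_-$ is done locally. The paper writes $a\equiv\varepsilon b$ and $m\equiv\delta n \pmod{p^e}$, then checks that $p^e$ divides $C,D$ if $\varepsilon=\delta$ and $A,B$ if $\varepsilon=-\delta$. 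This gives only a lower bound on valuations.
\end{enumerate}

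You instead use the factorisations $A\pm B=(a\pm b)(m\pm n)$, $C+D=(a-b)(m+n)$, $C-D=(a+b)(m-n)$. Combined with the invariance of $\min(v_p(A),v_p(B))$ under $(A,B)\mapsto(A+B,A-B)$ for odd $p$, these let you compute $v_p(g_\pm)$ \emph{exactly}. Your four sign cases coincide with the paper's $\varepsilon=\pm\delta$ dichotomy. Because you get equalities rather than one-sided bounds, the product formula, the coprimality and the divisibility all follow from a single computation. As a by-product, your valuation formulas give the closed forms $g_+=\gcd(a-b,m+n)\gcd(a+b,m-n)$ and $g_-=\gcd(a-b,m-n)\gcd(a+b,m+n)$, which make the lemma transparent. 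The paper's route, in exchange, obtains $g_\pm\mid U_1$ and $g_\pm\mid U_2$ from pure polynomial identities, with no valuation bookkeeping.

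Two small wording points. In your first case, the minima vanish because in each minimum one of the two products is entirely prime to $p$, since $p$ divides at most one of $a\pm b$, resp.\ $m\pm n$. Merely saying that each product has some factor prime to $p$ would not suffice. Also, $g_+g_-\mid g_0$ is a strengthening of $g_\pm^2\mid f_1$, not an equivalent form of it.
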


\begin{proof}
Set $A = am+bn$, $B = an+bm$, $C = am-bn$, $D = an-bm$, so
$g_+ = \gcd(A, B)$ and $g_- = \gcd(|C|, |D|)$, and write
$g := \gcd(a^2-b^2,\, m^2-n^2) = \gcd(U_1, U_2)$.

\emph{Step 1: $g_\pm \mid g$.}
The four linear combinations
\[
  mA - nB = a U_2,\quad nA - mB = -b U_2, \quad
  aA - bB = m U_1,\quad bA - aB = -n U_1
\]
show $g_+ \mid a U_2,\, b U_2,\, m U_1,\, n U_1$.
Since $\gcd(a, b) = 1$, the first two yield $g_+ \mid U_2$;
since $\gcd(m, n) = 1$, the last two yield $g_+ \mid U_1$, hence
$g_+ \mid g$. The analogous combinations for $C, D$
($mC - nD = a U_2$, $nC - mD = b U_2$, $aC + bD = m U_1$,
$bC + aD = n U_1$) give $g_- \mid g$.

\emph{Step 2: $\gcd(g_+, g_-) = 1$.}
Since $a - b$ and $m - n$ are odd by Definition~\ref{def:master},
exactly one of each pair $\{A, B\}$ and $\{C, D\}$ is even and the
other odd, so both $g_+$ and $g_-$ are odd. Suppose an odd prime
$p$ divided both. Then $p \mid A, B, C, D$, so
$p \mid A + C = 2 a m$, $p \mid B + D = 2 a n$, $p \mid A - C = 2 b n$,
$p \mid B - D = 2 b m$. As $p$ is odd, $p$ divides $am, an, bm, bn$.
Combining $\gcd(m, n) = 1$ with $p \mid am$ and $p \mid an$ gives
$p \mid a$; combining $\gcd(m, n) = 1$ with $p \mid bm$ and $p \mid bn$
gives $p \mid b$, contradicting $\gcd(a, b) = 1$.

\emph{Step 3: $g_+ g_- = g$ (the converse direction).}
Step 1 gives $g_+ g_- \mid g \cdot g = g^2$, but combined with
$\gcd(g_+, g_-) = 1$ from Step 2 it actually gives $g_+ g_- \mid g$.
For equality we show that every odd prime power $p^e \mid g$ divides
exactly one of $g_+, g_-$. Pick odd prime $p$ with $p^e \,\|\, g$.
Since $\gcd(a, b) = 1$, the entire $p$-part of
$U_1 = (a-b)(a+b)$ sits in exactly one of the factors, so
$a \equiv \varepsilon b \pmod{p^e}$ with $\varepsilon \in \{+1, -1\}$;
analogously $m \equiv \delta n \pmod{p^e}$ with $\delta \in \{+1, -1\}$.
Reducing mod $p^e$:
\begin{itemize}[leftmargin=2em]
\item If $\varepsilon = \delta$:
  $C = am - bn \equiv \delta b n - bn = (\varepsilon\delta - 1) b n \equiv 0$
  and $D = an - bm \equiv 0$, hence $p^e \mid g_-$.
\item If $\varepsilon = -\delta$:
  $A = am + bn \equiv -\delta b n + bn = (-\varepsilon\delta + 1) b n \equiv 0$
  and $B = an + bm \equiv 0$, hence $p^e \mid g_+$.
\end{itemize}
Since $g$ is odd (both $U_1, U_2$ are products of two odd numbers),
multiplying over all primes $p \mid g$ gives $g \mid g_+ g_-$.
Combined with $g_+ g_- \mid g$ this yields $g_+ g_- = g$.

\emph{Step 4: $g_\pm^2 \mid f_1$.}
From Step 1, $g_\pm \mid U_1$ and $g_\pm \mid U_2$. Hence
$g_\pm \mid W_1 U_2$ and $g_\pm \mid U_1 V_2$, so
$g_\pm^2 \mid (W_1 U_2)^2 + (U_1 V_2)^2 = f_1$.
\end{proof}

We call $(g_+, g_-)$ the \emph{semi-scaled fibration coordinates}.
The pair partitions Master-Hits into four disjoint strata:
$(1,1)$, $(r,1)$ with $r>1$, $(1,r)$ with $r>1$, and ``both~$>\!1$''.
We say a hit is \emph{strictly semi-scaled} if
\[
  \min(g_+, g_-) \;=\; 1
  \quad\text{and}\quad
  \max(g_+, g_-) \;>\; 1,
\]
i.e.\ it lies in $\{(r, 1) : r > 1\} \cup \{(1, r) : r > 1\}$.
The Saunderson generator produces strictly semi-scaled hits by
construction; the $(1,1)$- and ``both~$>\!1$''-strata are excluded.

Empirically, every single-blocker hit is strictly semi-scaled. The
converse fails: many strictly semi-scaled hits carry several
blockers. We therefore conjecture only the forward implication.

\begin{conjecture}[Single-blocker hits are strictly semi-scaled]\label{conj:single-blocker}
Every primitive Master-Hit $(a,b,m,n)$ with exactly one
exponent-one blocker satisfies
\[
  \min(g_+,\, g_-) \;=\; 1
  \quad\text{and}\quad
  \max(g_+,\, g_-) \;>\; 1.
\]
\end{conjecture}

\begin{verification}[Single-blocker verification of Conjecture~\ref{conj:single-blocker}]\label{thm:single-blocker-empirical}
On $\mathcal D_{\mathrm{fact}}$, all $272$ Master-Hits with
$\mathrm{num\_blockers} = 1$ are strictly semi-scaled. More
precisely, $97$ of them lie in the stratum $(r, 1)$ with $r > 1$
and $175$ in the stratum $(1, r)$ with $r > 1$; none is in the
$(1, 1)$- or ``both~$>\!1$''-strata. We make no converse claim:
the strictly semi-scaled stratum also contains many multi-blocker
records.
\end{verification}

\begin{proof}[Verification]
For each single-blocker record we computed $(g_+, g_-)$ from
$(a, b, m, n)$ and verified that $\min(g_+, g_-) = 1$ and
$\max(g_+, g_-) > 1$. The check was exhaustive on all $272$ such
records. The reproducibility script is
\texttt{paper2/analysis/verify\_single\_blocker.py}.
\end{proof}

\begin{remark}
The Saunderson family corresponds, in the algebraic sense, to the
union of the two strictly semi-scaled strata. Of the $272$
single-blocker hits in our database, only $96$ are tagged
Saunderson by the generator with bound $g \le 500$
(Table~\ref{tab:families}); the remaining $176$ exceed this bound
but exhibit identical $(g_+, g_-)$-behaviour.
Conjecture~\ref{conj:single-blocker} thus reads as the algebraically
correct closure of the Saunderson
family.
\end{remark}

\section{Open questions}\label{sec:open}

The structural picture this paper establishes leaves five concrete
open questions.

\begin{enumerate}[leftmargin=2.5em,itemsep=4pt]
\item \emph{Conjecture~\ref{conj:E1} (the canonical-decomposition
conjecture).} The pair $(\xi, \eta)$ in Lemma~\ref{lem:canonical} is
never a Pythagorean leg pair. This is the cleanest reformulation of
the perfect-cuboid obstruction \emph{within the Master-Hit class
studied here} that we know: it is an elementary statement about two
coprime explicit polynomial expressions in $a, b, m, n$, and a proof
would yield Conjecture~\ref{conj:basis} and
Corollary~\ref{cor:noperfect} unconditionally for that class. The
tools likely needed are classical descent on the genus-$1$ quartic
of Section~\ref{sec:fibration} together with a height/discriminant
estimate on $E_{m,n}$.

\item \emph{Proof of Conjecture~\ref{conj:blocker}.}
The verification on $\mathcal{D}_{\mathrm{fact}}$
(Verification~\ref{thm:finite-verification}) is empirical evidence rather
than a proof. A proof is expected to use the arithmetic of $E_{m,n}$,
in particular the relation between the $\ell$-adic Galois
representation of $E_{m,n}$ and the Gaussian norm $f_1$.

\item \emph{Higher-rank fibres.} Six fibres in our database
(Table~\ref{tab:high-rank}) have rank at least $5$, found by
increasing Sage's \texttt{descent\_second\_limit} to $20$. We have
not searched for fibres of rank $\ge 6$. Is
$\sup_{(m,n)} \rk E_{m,n}(\QQ) = \infty$, or bounded? In the
elliptic-curve over $\QQ(t)$ analogy one should expect some growth,
but a uniform bound for our specific family $E_{m,n}$ is unknown.

\item \emph{A geometric reduction to a curve of high genus.}
A perfect cuboid in the Master-Hit class corresponds to a rational
point on the affine algebraic surface
\[
  S \;:=\; \bigl\{\,(a,b,m,n,q,d)\in\mathbb{A}^6_{\QQ}\ :\
    q^2 = M(a,b,m,n),\ d^2 = f_1(a,b,m,n)\,\bigr\},
\]
subject additionally to the arithmetic open conditions
$\gcd(a, b) = \gcd(m, n) = 1$, $a > b > 0$, $m > n > 0$, and $a-b$,
$m-n$ both odd. A possible approach is to cut $S$ by suitable
one-parameter subfamilies---for instance by fixing a coprime
$(m, n)$, which yields a curve in $(a, b, q, d)$ of genus depending
on $(m, n)$---and to apply methods for curves of genus $\ge 2$. At
present we do not have such a reduction; Faltings' theorem applies
to curves of genus $\ge 2$, not directly to the surface $S$.

\item \emph{Conjecture~\ref{conj:single-blocker} (the
single-blocker characterisation).} The forward direction
``single-blocker $\Rightarrow$ strictly semi-scaled'' is the
verified half. A proof should compare the prime supports of $f_1$
and $g_+ g_-$: in the strictly semi-scaled stratum, the structure
$f_1 = g_0^2 \cdot \mathrm{rf}$ from
Section~\ref{sec:blocker} forces $\mathrm{rf}$ to be supported on
a single residue class modulo~$4$, in line with the empirical
$k$-distribution of Remark~\ref{rem:k-distribution}.
\end{enumerate}

\section{Code, data, and reproducibility}\label{ssec:reproducibility}

All computational claims of this paper are reproducible from the
code and data released as ancillary files with the preprint. The
full source tree, kept in lockstep with this manuscript, lives at
\begin{center}
\url{https://github.com/renpe/euler-brick-obstructions/tree/main/paper2}
\end{center}
and contains:

\begin{itemize}[leftmargin=2em,itemsep=2pt]
  \item \texttt{migration/pub\_schema.sql}: isolated DDL of the
    publication schema \texttt{pub} for readers who want to inspect
    the table structure without unpacking the released SQL dump
    (which already contains the same schema).
  \item \texttt{generation/}: the Mordell--Weil generator
    \texttt{mw\_dispatcher.py} with worker
    \texttt{mw\_fibre\_worker.sage}, a rerun helper
    \texttt{mw\_rerun.py}, and the systematic rank-scan
    \texttt{search\_new\_fibres.sage}.
  \item \texttt{factorization/pub\_factorize.py}: a three-stage
    factoring pipeline for $f_1$. Stage~1 is trial division by primes
    up to $10^6$. Stage~2 invokes \texttt{sympy.factorint} (with a
    $3$\,s wall-clock timeout per record); this catches most composite
    residuals without resorting to a heavy backend. Stage~3 hands any
    surviving residual to YAFU ($\ge 2.13$, default
    \texttt{plan = normal}) with a $10$\,min wall-clock timeout per
    record; YAFU dispatches internally to QS for residuals up to
    $\sim$$10^{95}$ digits and to GNFS above that. Records whose
    residual still has not split after the timeout are marked as
    \emph{partially factored} and live in $\mathcal{D}_{\mathrm{part}}
    \setminus \mathcal{D}_{\mathrm{fact}}$ (Table~\ref{tab:verification}).
    Because QS/GNFS internally use random sieving, a record's
    re-factoring is wall-clock-deterministic only up to seed; the
    \emph{output} factorisation is unique up to ordering and is
    cross-checked by exact integer multiplication.
  \item \texttt{analysis/}: verification scripts
    \texttt{theorem\_check.py}
    (Verification~\ref{thm:finite-verification}),
    \texttt{perfect\_check.py}
    (Verification~\ref{thm:emp-non-exist}),
    \texttt{validate\_xyz.py} (internal consistency check;
    see Verification~\ref{thm:db-consistency} below), and
    \texttt{classify\_families.py} (Section~\ref{sec:empirics}).
\end{itemize}

\begin{verification}[Internal database consistency]\label{thm:db-consistency}
For every record in $\mathcal{D}_{\mathrm{all}}$ the stored
quantities satisfy the elementary identities
\begin{align*}
  &x = U_1 U_2,\quad y = V_1 U_2,\quad z = U_1 V_2,
  \qquad g_{\mathrm{scale}} = \gcd(x, y, z),\\
  &(x_{\mathrm{prim}}, y_{\mathrm{prim}}, z_{\mathrm{prim}}) = (x, y, z) / g_{\mathrm{scale}},
\end{align*}
where $U_i, V_i$ are computed from $(a, b, m, n)$ as in
Section~\ref{sec:setup}. In particular, $g_{\mathrm{scale}} = 1$ is
equivalent to $\gcd(x, y, z) = 1$, i.e.\ to the brick being
primitive.
\end{verification}

\begin{proof}
The check is purely arithmetic and is reproduced by
\texttt{validate\_xyz.py}, which iterates over every record in
$\mathcal{D}_{\mathrm{all}}$ and verifies the three identities using
exact integer arithmetic. The script reports zero violations on
$\mathcal{D}_{\mathrm{all}}$.
\end{proof}

\paragraph{Data release.} The publication schema \texttt{pub} is
released as
\begin{itemize}[leftmargin=2em,itemsep=2pt]
  \item a PostgreSQL dump \texttt{pub\_dump.sql.gz}, and
  \item CSV exports of the tables
    \texttt{pub.master\_hits},
    \texttt{pub.prim\_brick\_factors},
    \texttt{pub.fibers},
    \texttt{pub.generator\_groups}, and
    \texttt{pub.hit\_groups},
\end{itemize}
together with SHA-$256$ hashes of every released file. The CSV
release contains, in particular, the $1{,}284{,}670$ Master-Hits and
the complete factorisation of $f_1$ for the $151{,}575$ records of
$\mathcal{D}_{\mathrm{fact}}$.

\paragraph{Tooling versions.}
SageMath~10.7~\cite{sage}, PARI/GP~2.17.3~\cite{pari},
PostgreSQL~$18$, YAFU~$2.13$, Python~$3.13$, \texttt{sympy}~$1.13$.
A reproduction makefile (\texttt{Makefile} in
\texttt{paper2/}) provides targets
\texttt{verify-theorem}, \texttt{verify-perfect},
\texttt{verify-consistency}, and \texttt{verify-all} that run the
verification scripts against the released data and compare against
the published log files.

\appendix

\section{The four exceptional single-blocker master tuples}\label{app:exceptional-bricks}

The four single-blocker Master-Hits with $k\in\{29, 89\}$ in the
$D = n$/$D = m$ formula of Remark~\ref{rem:twelve} are listed
below.

\begin{center}
\begin{tabular}{r r r r r}
\toprule
$a$ & $b$ & $m$ & $n$ & $k$\\
\midrule
$835$        & $88$         & $160$    & $89$     & $29$\\
$180{,}133$  & $174{,}512$  & $3{,}977$  & $3{,}904$  & $29$\\
$731{,}423$  & $108{,}452$  & $14{,}896$ & $8{,}177$  & $29$\\
$1{,}162{,}341$ & $60{,}812$ & $18{,}768$ & $11{,}065$ & $89$\\
\bottomrule
\end{tabular}
\end{center}

\noindent
The corresponding edges $(x, y, z)$ and full prime factorisations
of $f_1$ are recorded in the released CSV.

\clearpage
\section{Counterexamples to the largest-prime strengthening}\label{app:largest-fails-counterexamples}

The three Master-Hits referenced in
Remark~\ref{rem:largest-fails}, in which the largest prime divisor
of $f_1$ outside $\mathcal{P}$ appears with even exponent (so the
exponent-one blocker required by Conjecture~\ref{conj:blocker} is
some smaller prime), are:

\begin{center}
\begin{tabular}{r r r r}
\toprule
$a$ & $b$ & $m$ & $n$\\
\midrule
$1{,}770$    & $1{,}219$    & $1{,}408$  & $477$\\
$39{,}732$   & $16{,}895$   & $6{,}400$  & $3{,}069$\\
$50{,}626$   & $33{,}631$   & $6{,}461$  & $3{,}736$\\
\bottomrule
\end{tabular}
\end{center}

\noindent
For each, the full factorisation of $f_1$ in the released CSV
exhibits a symmetric-exponent prime that exceeds every blocker.

\section*{Acknowledgements}
The author thanks the SageMath and PARI/GP development teams for
their freely available software.

\bibliographystyle{amsalpha}
\bibliography{paper2}

\end{document}